\newenvironment{proof}{\paragraph{Proof.}}{\hfill$\square$}
\numberwithin{equation}{section}
\numberwithin{figure}{section}
\newcommand{\argmax}{\operatornamewithlimits{arg\,max}}
\newcommand{\argmin}{\operatornamewithlimits{arg\,min}}
\newcommand{\n}[1]{\left\lVert#1\right\rVert}
\newcommand{\inp}[2]{\langle #1, #2 \rangle}
\newtheorem{Th}{Theorem}[section]
\newtheorem{Lemma}[Th]{Lemma}
\newtheorem{Def}[Th]{Definition}
\newtheorem{Example}[Th]{Example}
\def\tto{\rightrightarrows}
\def\zb{\bar{z}}
\def\vb{\bar{v}}
\def\xb{\bar{x}}
\def\yb{\bar{y}}
\def\pb{\bar{p}}
\def\lmt{\widehat{\lambda}}
\def\rge{{\rm rge\,}}
\def\st{\stackrel}
\def\Rm{\mathbb{R}^m}
\def\R{\mathbb{R}}
\def\Rn{\mathbb{R}^n}
\def\B{\mathbb{B}}
\def\O{\Omega}
\def\mcK{\mathcal{K}}
\def\dps{\displaystyle}
\def\p{\partial}
\def\dom{\mathrm{dom}\,}
\def\gph{\mathrm{gph}\,}
\def\Lcxb{\Lambda_{\mathrm{com}}(\xb)}
\def\lm{\lambda}
\def\th{\theta}
\def\al{\alpha}
\def\ph{\varphi}
\def\olm{\bar\lambda}
\def\thy{\theta_{Y,B}}
\def\thk{\theta_{\ok,B}}
\def\ox{\bar{x}}
\def\ov{\bar{v}}
\def\oy{\bar{y}}
\def\ow{\bar{w}}
\def\oz{\bar{z}}
\def\dd{\delta}
\def\ok{{\cal K}}
\def\d{{\mathrm d}}
\def\ve{\varepsilon}
\def\epsilon{\ve}
\def\dist{{\rm dist}}
\def\dom{{\rm dom}\,}
\def\span{{\rm span}}
\def\Lm{\Lambda}
\def\dn{\downarrow}
\def\oR{\overline{\mathbb{R}}}
\def\N{\mathbb{N}}
\def\emp{\emptyset}
\def\la{\langle}
\def\ra{\rangle}
\def\tilde{\widetilde}
\def\hat{\widehat}
\def\disp{\displaystyle}
\def\ss{\scriptsize }
\begin{document}
\begin{center}
\textbf{CRITICALITY OF LAGRANGE MULTIPLIERS\\IN EXTENDED NONLINEAR OPTIMIZATION}
\end{center}
\begin{center}
HONG DO\footnote{Department of Mathematics, Wayne State University, Detroit, Michigan, 48202, USA (fq0828@wayne.edu). Research of this author was partly supported by the USA National Science Foundation under grants DMS-1512846 and DMS-1808978, and by the USA Air Force Office of Scientific Research grant \#15RT04.}, BORIS S. MORDUKHOVICH\footnote{Department of Mathematics, Wayne State University, Detroit, Michigan, 48202, USA (boris@math.wayne.edu). Research of this author was partly supported by the USA National Science Foundation under grants DMS-1512846 and DMS-1808978, and by the USA Air Force Office of Scientific Research grant \#15RT04.} and M. EBRAHIM SARABI\footnote{Department of Mathematics, Miami University, Oxford, Ohio, 45056, USA (sarabim@miamioh.edu).}
\end{center}
\providecommand{\Abstract}[1]{\textbf{Abstract. }#1}
\Abstract{The paper is devoted to the study and applications of criticality of Lagrange multipliers in variational systems, which are associated with the class of problems in composite optimization known as extended nonlinear programming (ENLP). The importance of both ENLP and the concept of multiplier criticality in variational systems has been recognized in theoretical and numerical aspects of optimization and variational analysis, while the criticality notion has never been investigated in the ENLP framework. We present here a systematic study of critical and noncritical multipliers in a general variational setting that covers, in particular, KKT systems in ENLP with establishing their verifiable characterizations as well as relationships between noncriticality and other stability notions in variational analysis. Our approach is mainly based on advanced tools of second-order variational analysis and generalized differentiation.}\\[1ex]
{\textbf{Keywords} Variational analysis, composite optimization, extended nonlinear programming, critical and noncritical multipliers, generalized differentiation, stability of variational systems}\\[1ex]
{\bf Mathematical Subject Classification (2000)} 90C31, 49J52, 49J53

\section{Introduction}\label{intro}

One of the major goals of this paper is to study a remarkable class of optimization problems given in the following, formally unconstrained, {\em composite format}:
\begin{equation}\label{co}
\textrm{minimize }\;\ph(x):=\ph_0(x)+\theta\big(\Phi(x)\big),\quad x\in\R^n,
\end{equation}
where $\ph_0\colon\R^n\to\R$ is an original cost function and $\Phi\colon\R^n\to\R^m$ is a constraint mapping, both are twice differentiable at the reference points unless otherwise stated, and where $\th\colon\R^m\to\oR:=(-\infty,\infty]$ is an extended-real-valued function defined for all $u\in\R^m$ by the formula
\begin{equation}\label{theta}
\th(u)=\thy(u):=\underset{y\in Y}\sup{\Big\{\inp{y}{u}-\frac{1}{2}\inp{y}{By}\Big\}}
\end{equation}
via a convex polyhedral set $Y:=\{y\in\mathbb{R}^m\arrowvert\;\inp{b_i}{y}\le\alpha_i,\;i=1,\ldots,p\}$ as well as an $m\times m$ positive-semidefinite and symmetric  matrix $B$.

Note that the unconstrained composite format \eqref{co} gives us a convenient representation of the constrained optimization problem to minimize the cost function $\ph_0(x)$ subject to the inclusion constraint $\Phi(x)\in\Theta:=\{u\in\R^m|\;\th(u)<\infty\}$. In particular, conventional nonlinear programs (NLPs) with $s$ inequality constraints and $m-s$ equality constraints described by ${\cal C}^2$-smooth functions can be written in the composite format \eqref{co}, where $\th:=\dd_\Theta$ is the indicator function of the polyhedron $\Theta:=\R^s_-\times\{0\}^{m-s}$ that is equal to $0$ on $\Theta$ and to $\infty$ otherwise.

Problems of the ENLP type \eqref{co} with $\th$ given by \eqref{theta} were introduced by Rockafellar \cite{r} under the name of {\em extended nonlinear programs} (ENLPs). It has been realized over the years that ENLPs in this form provide a suitable framework for developing both theoretical and computational aspects of optimization in broad classes of constrained problems that include stochastic programming, robust optimization, etc. The special expression (\ref{theta}) for the extended-real-valued function $\th$, known as the {\em dualizing representation} or the {\em piecewise linear-quadratic penalty}, is significant for the theory and applications of Lagrange multipliers in the Karush-Kuhn-Tucker (KKT) systems associated with the ENLPs under consideration.

It is not hard to check (see more details in Section~\ref{comp-opt}) that KKT systems associated with local optimal solutions to ENLPs
are included in the following more general class of {\em variational systems} of the {\em subdifferential type}
\begin{equation}\label{VS}
\Psi(x,\lm):=f(x)+\nabla\Phi(x)^*\lm=0,\;\lm\in\p\th\big(\Phi(x)\big)\;\mbox{ with }\;\th=\thy,
\end{equation}
where $f\colon\R^n\rightarrow\R^n$ is a differentiable mapping while $\Phi\colon\R^n\rightarrow\R^m$ is a twice differentiable mapping in the classical sense \cite[Definition~13.1(i)]{rw},
 where $\thy$ is taken from \eqref{theta}, where $^*$ indicates the matrix transposition/adjoint operator, and where $\p$ stands for the subdifferential of convex analysis.

The main attention of this paper is paid to a systematic study of the  {\em multiplier criticality} concept (i.e., the notions of critical and noncritical Lagrange multipliers) for variational systems of type \eqref{VS} with applications to KKT systems in ENLPs.

The notions of critical and noncritical multipliers were first introduced by Izmailov \cite{iz05} for the classical KKT systems corresponding to {\em NLPs with equality constraints} described by ${\cal C}^2$-smooth functions. It has been realized from the very beginning that the presence of critical multipliers plays a {\em negative} role in numerical optimization and is largely responsible for primal slow convergence in primal-dual algorithms of the Newtonian type. Further strong developments in this direction for NLPs and related variational inequalities have been done over the years, mainly by Izmailov, Solodov, and their collaborators; see, e.g., the book \cite{is14} and the survey paper \cite{is15}, which is entirely devoted to critical multipliers. The criticality definitions in the above publications are heavily based on the specific structures of NLPs and related variational inequalities.

In \cite{ms17}, Mordukhovich and Sarabi suggested new definitions of critical and noncritical multipliers for a general class of {\em subdifferential variational systems} of type \eqref{VS}, where $\th$ may be even a nonconvex extended-real-valued function. The given definitions in \cite{ms17} are expressed via second-order generalized differential constructions of variational analysis while reduced to those from \cite{iz05,is14} for the classical KKT systems corresponding to NLPs. Furthermore, for extended-real-valued {\em convex piecewise linear} (CPWL) functions $\th$ in \eqref{VS}, which include \eqref{theta} when $B=0$, the definitions of critical and noncritical multipliers are expressed in \cite{ms17} entirely in terms of the problem data with the subsequent characterizations of criticality and various applications to optimization and stability problems for such systems.

The quite recent paper of the same authors \cite{ms18} contains counterparts of some major results from \cite{ms17} with developing also novel issues on criticality for variational systems described by
\begin{equation}\label{VS1}
f(x)+\nabla\Phi(x)^*\lm=0,\;\lm\in N_\Theta\big(\Phi(x)\big),
\end{equation}
where $f$ and $\Phi$ are the same as in \eqref{VS}, and where $N_\Theta$ is the normal cone to a {\em ${\cal C}^2$-cone reducible} set $\Theta\subset\R^m$. This framework covers, in particular, KKT systems associated with general problems of (nonpolyhedral) {\em conic programming}; see, e.g., \cite{bs}.\vspace*{0.05in}

The main results of the current paper extend those from \cite{ms17}, obtained for CPWL functions $\th$, to the case of functions $\thy$ defined in \eqref{theta}, which form a major class of extended-real-valued convex {\em piecewise linear-quadratic} functions in variational analysis; see \cite{rw} and Section~\ref{prel} below. At the same time, the new results obtained here are completely independent from those derived for the variational system \eqref{VS1} in \cite{ms17} in the case of nonpolyhedral sets $\Theta$ therein.

The basic tools of first-order and second-order generalized differentiation employed in this paper are {\em tangentially generated}, except the classical subdifferential of convex analysis. We mostly rely on the generalized differential theory in primal spaces developed by Rockafellar; see \cite{rw} and the references therein. Using these tools allows us to establish verifiable characterizations of noncritical multipliers in the general setting of \eqref{VS}, to characterize the uniqueness of Lagrange multipliers in \eqref{VS}, to ensure noncriticality for ENLPs via a new second-order optimality condition, which is employed in turn to verify the important stability property of solutions to KKT systems that is known as robust isolated calmness and is related to noncriticality. We also reveal a relationship between the isolated calmness and Lipschitz-like properties of solution maps for canonically perturbed variational systems with the piecewise linear-quadratic term \eqref{theta}.

As mentioned above, the existence of critical multipliers is a negative factor in convergence analysis, since it seems to prevent primal superlinear convergence of major primal-dual algorithms. Thus it is crucial to find verifiable conditions, expressed entirely in terms of the problem data in question, which ensure that critical multipliers corresponding to this minimizer do not arise. It is conjectured in \cite{m15}, based on preliminary results for NLPs, that {\em full stability} of local minimizers in the sense of \cite{lpr} {\em rules out} the appearance of {\em critical multiplies}. This conjecture was verified in \cite{ms17} for polyhedral problems of type \eqref{co} with convex piecewise linear functions $\th$. Now we justify this conjecture in the general case of ENLPs with piecewise linear-quadratic functions $\thy$ in form \eqref{theta}. \vspace*{0.05in}

The rest of the paper is organized as follows. In Section~\ref{prel} we present some definitions and facts from variational analysis and generalized differentiation that are broadly employed throughout the whole paper. Other variational constructions and results are recalled in those places of the subsequent sections where they are actually used.

Section~\ref{crit-def} contains basic {\em definitions} of {\em critical} and {\em noncritical multipliers} for variational systems \eqref{VS} involving piecewise linear-quadratic functions of type \eqref{theta} with providing equivalent descriptions, examples, and discussions. In Section~\ref{unique} we obtain new results on the relationship between the well-recognized {\em calmness} and {\em isolated calmness} properties of multiplier maps associated with the variational systems \eqref{VS} with the piecewise linear-quadratic term \eqref{theta} and the {\em uniqueness} of Lagrange multipliers in such systems. This is certainly of its independent interest, while the developed approach and results can be viewed as the preparation to the subsequent characterizations of noncritical multipliers in the variational systems under consideration.

Section~\ref{noncrit-char} plays a central role in the paper. It establishes major {\em characterizations} of {\em noncritical multipliers} for systems \eqref{VS} with $\thy$ taken from \eqref{theta} via a novel {\em semi-isolated calmness} property for solution maps to canonical perturbations of \eqref{VS} and also via two new {\em error bounds} that are specific for the variational systems \eqref{VS} with the piecewise linear-quadratic term
\eqref{theta}.

Section~\ref{comp-opt} is devoted to noncritical multipliers in {\em KKT systems} associated with {\em ENLPs} for which the results of the previous sections are automatically applied with the specification of $\Psi$ in \eqref{VS} as the $x$-partial gradient of the appropriate Lagrangian. The main new result here, that is characteristic to the optimization framework, is a novel {\em second-order sufficient condition} for strict local minimizers, which also ensures that all the corresponding multipliers are noncritical.

In Section~\ref{full-stab} we justify, for the case of ENLPs from \eqref{co} and \eqref{theta}, the aforementioned conjecture on {\em excluding critical multipliers} corresponding to a {\em fully stable} local minimizer for the given ENLP. The proof of this result is based on characterizations of noncriticality via semi-isolated calmness obtained in Section~\ref{noncrit-char}.

The last Section~\ref{lip-stab} provides applications of the developed characterizations of noncritical multipliers for the variational systems under consideration to the study of an important stability property of solution maps to KKT systems associated with ENLPs. This property of set-valued mappings has been recently recognized as {\em robust isolated calmness}. The results obtained above allow us to characterize robust isolated calmness via the noncriticality and uniqueness of Lagrange multipliers on one side and via the new second-order optimality condition for ENLPs on the other.
Finally, we characterize the Lipschitz-like/Aubin property of solution maps to perturbed variational systems and establish its relationship with isolated calmness.

\section{Preliminaries from Variational Analysis}\label{prel}

In this section we review, based on the book \cite{rw}, some basic notions of generalized differentiation in variational analysis and then recall important facts broadly used in what follows. Throughout the paper we use the standard notation of variational analysis; see \cite{m18,rw}.

Given a nonempty subset $\O\subset\R^d$ and a point $\oz\in\O$, the (Bouligand-Severi) {\em tangent/contingent cone} $T_\Omega(z)$ to $\Omega$ at $\oz$ is defined by
\begin{equation}\label{tan}
T_\Omega(\oz):=\Big\{w\in\R^d\Big|\;\exists\,z_k\xrightarrow{\Omega}\oz,\;\exists\,\alpha_k\ge 0\;\textrm{ with }\;\alpha_k(z_k-z)\rightarrow w\;\textrm{ as }\;k\rightarrow\infty\Big\},
\end{equation}
where the symbol $z\st{\O}{\to}\oz$ indicates that $z\to\oz$ with $z\in\O$.

For a set-valued mapping $F\colon\R^n\rightrightarrows\R^p$, define its {\em domain} and {\em graph} by, respectively,
\begin{equation*}
\dom F:=\big\{x\in\R^n\big|\;F(x)\ne\emp\big\}\;\mbox{ and }\;\gph F:=\big\{(x,y)\in\R^n\times\R^p\big|\;y\in F(x)\big\}.
\end{equation*}
The {\em graphical derivative} of $F$ at $(\ox,\oy)\in\gph F$ is given by
\begin{equation}\label{gr-der}
DF(\xb,\yb)(u):=\big\{v\in\R^p\big|\;(u,v)\in T_{\gph F}(\xb,\yb)\big\},\quad u\in\R^n.
\end{equation}

Next we consider an extended-real-valued function $\ph\colon\R^n\to\oR:=(-\infty,\infty]$ with $\ox\in\dom\ph:=\{x\in\R^n|\;\ph(x)<\infty\}$.
Given $\oy\in\R^n$, the {\em second subderivative} of $\ph$ at $(\ox,\oy)$ in the direction $\ow$ is defined by
\begin{equation}\label{ssd1}
\d^2\ph(\bar x,\oy)(\ow):=\liminf_{\substack{t\dn 0\\w\to\ow}}\dfrac{\ph(\ox+tw)-\ph(\ox)-t\langle\oy,\,w\rangle}{\frac{1}{2}t^2}.
\end{equation}
When $\ph$ is convex and proper (i.e., $\dom\ph\ne\emp$), we use its {\em subdifferential} (i.e., the collection of subgradients) at $\ox\in\dom\ph$ given by
\begin{equation}\label{sub}
\partial\ph(\ox):=\big\{v\in\R^n\big|\;\la v,x-\ox\ra\le\ph(x)-\ph(\ox)\;\mbox{ for all }\;x\in\R^n\big\}.
\end{equation}
If $\O\subset\R^n$ is a nonempty convex set, then the {\em normal cone} to $\O$ at $\ox\in\O$ is the subdifferential \eqref{sub} of its indicator function and thus is defined by
\begin{equation}\label{nc-conv}
N_\O(\ox):=\big\{v\in\R^n\big|\;\la v,x-\ox\ra\le 0\;\mbox{ for all }\;x\in\O\big\}.
\end{equation}
The {\em critical cone} to $\O$ at $\ox$ for $\ov\in N_\O(\ox)$ is expressed via the tangent cone \eqref{tan} as
\begin{equation}\label{cri3}
K_\O(\ox,\ov):=T_\O(\ox)\cap\{\ov\}^\perp
\end{equation}
with the notation $\{\ov\}^\perp:=\big\{w\in\R^n|\;\la w,v\ra=0\}$.

Along with \eqref{ssd1}, we employ in this paper yet another second-order generalized derivative of an extended-real-valued convex function $\ph\colon\R^n\to\oR$ at $\ox\in\dom\ph$ for $\ov\in\partial\ph(\ox)$ that is defined via the graphical derivative \eqref{gr-der} of the subgradient mapping $\partial\ph\colon\R^n\tto\R^n$ under the name of the {\em subgradient graphical derivative} by
\begin{equation}\label{sgd}
D\partial\ph(\ox,\ov)(u):=D\big(\partial\ph\big)(\ox,\ov)(u),\quad u\in\R^n.
\end{equation}

Invoking the constructions above, we now formulate the basic facts about the functions $\thy$ taken from \eqref{theta} that are systematically exploited in the paper. The proofs of these facts can be found in \cite[Examples~11.18, 13.23 and Theorem~13.40]{rw}. Recall that the {\em horizon cone} of a nonempty set $Y\subset\R^m$ used below is defined by
\begin{equation*}
Y^\infty:=\big\{y\in\R^m\big|\;\exists\,y_k\in Y,\;\exists\,\lm_k\dn 0\;\mbox{ with }\;\lm_k y_k\to y\big\}.
\end{equation*}
Recall also \cite[Definition~10.20]{rw} that a function $\ph\colon\R^n\to\oR$ is {\em piecewise linear-quadratic} if its domain $\dom\ph$  can be represented as the union of finitely many convex polyhedral sets, relative to each of which $\ph(x)$ is given by an
expression of the form $\frac{1}{2}\la x,Ax\ra+\la a,x\ra+\al$ for some scalar $\al\in\R$, vector $a\in\R^n$, and $n\times n$ symmetric matrix $A$.

\begin{Th}{\bf(properties of piecewise linear-quadratic penalties).}\label{gdt} Let $\thy$ be defined by \eqref{theta}. Then the following properties hold:\vspace{-0.2 cm}
\begin{itemize}[noitemsep]
\item[\bf(i)] The function $\thy$ is a proper and convex piecewise linear-quadratic with the domain
\begin{equation*}
\dom\thy=\big(Y^\infty\cap\ker B\big)^*.
\end{equation*}
\item[\bf(ii)] The subdifferential \eqref{sub} of $\thy$ is calculated by
\begin{equation}\label{fo}
\partial\thy(u)=\argmax_{y\in Y}\big\{\inp{y}{u}-\frac{1}{2}\inp{y}{By}\big\}=(N_Y+B)^{-1}(u),\quad u\in\R^m.
\end{equation}
\item[\bf(iii)] Given any $(\oz,\olm)\in\gph\partial\thy$, the second subderivative \eqref{ssd1} is calculated by
\begin{equation}\label{ssd}
\d^2\thy(\zb,\olm)(u)=2\thk(u):=\sup_{w\in\ok}\big\{2\inp{w}{u}-\inp{w}{Bw}\big\},\quad u\in\R^m,
\end{equation}
in the same form $\thk(u)$ as in \eqref{theta} with the replacement of $Y$ by critical cone $\ok:=K_Y(\olm,\zb-B\olm)$ defined via \eqref{cri3}. Furthermore, the subgradient graphical derivative \eqref{sgd} of $\thy$ at $\oz$ for $\bar \lm$ is represented as
\begin{equation}\label{gdr}
D\partial\thy(\bar{z},\bar{\lm})(u)=\partial\thk(u),\quad u\in\R^m.
\end{equation}
\end{itemize}
\end{Th}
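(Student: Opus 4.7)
My plan is to reduce all three claims to standard convex-analytic facts by identifying $\thy$ as the Legendre--Fenchel conjugate of
\[ \psi(y):=\dd_Y(y)+\tfrac12\inp{y}{By}, \]
which is proper, convex, lsc, and piecewise linear-quadratic (its effective domain is the polyhedron $Y$, on which $\psi$ coincides with a convex quadratic). Parts (i) and (ii) then reduce to classical duality arguments, while part (iii) is handled by appealing to the main piecewise linear-quadratic second-order result of \cite{rw}.

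For (i), self-duality of the convex piecewise linear-quadratic class under conjugation (see \cite[Theorem~11.14 and Example~11.18]{rw}) yields that $\thy=\psi^*$ is itself proper convex piecewise linear-quadratic. For the effective domain, I would compute the recession function: positive semidefiniteness of $B$ gives $(\tfrac12\inp{\cdot}{B\cdot})^\infty=\dd_{\ker B}$, and since $\dd_Y^\infty=\dd_{Y^\infty}$, we obtain $\psi^\infty=\dd_{Y^\infty\cap\ker B}$. The general identity $\dom\psi^*=\{u:\inp{u}{d}\le\psi^\infty(d)\text{ for all }d\}$, specialized to an indicator recession function, then produces $\dom\thy=(Y^\infty\cap\ker B)^*$.

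For (ii), the argmax representation is the Fenchel--Young characterization: $y\in\p\thy(u)$ if and only if $y$ attains the supremum defining $\thy(u)$. For the second equality, conjugate duality gives $y\in\p\psi^*(u)\iff u\in\p\psi(y)$, and the convex sum rule (applicable because the quadratic summand is smooth) yields $\p\psi(y)=N_Y(y)+By$, whence $\p\thy=(N_Y+B)^{-1}$.

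For (iii), the genuinely delicate step, I would apply \cite[Theorem~13.40]{rw}: the second subderivative of a proper convex piecewise linear-quadratic function at a pair $(\zb,\olm)\in\gph\p\thy$ is again piecewise linear-quadratic, obtained from the original function by replacing the polyhedral set $Y$ with the critical cone $\ok=K_Y(\olm,\zb-B\olm)$ while keeping $B$ unchanged; note that $\zb-B\olm\in N_Y(\olm)$ by (ii), so $\ok$ is well defined. This yields \eqref{ssd}. Formula \eqref{gdr} then follows from \eqref{ssd} and the identity
\[ D(\p\ph)(\bar x,\bar v)(u)=\p\bigl(\tfrac12\d^2\ph(\bar x,\bar v)\bigr)(u), \]
which holds for convex piecewise linear-quadratic $\ph$ by the same theorem. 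The main obstacle is proving \eqref{ssd} from first principles, since it requires a careful limit analysis of the supremum defining $\thy$ along sequences $t\dn 0$ and $w\to\ow$, isolating the dual elements that remain active in the limit; however, \cite[Theorem~13.40]{rw} already packages this reduction for the entire piecewise linear-quadratic class, so the remaining task is only to match the objects appearing there with those in \eqref{ssd} and \eqref{gdr}.
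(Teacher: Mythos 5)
Your proposal is correct and matches the paper's approach: the paper itself gives no proof of Theorem~\ref{gdt} beyond citing \cite[Examples~11.18, 13.23 and Theorem~13.40]{rw}, and you reconstruct exactly those results via the conjugate representation $\thy=\psi^*$ with $\psi=\dd_Y+\frac12\inp{\cdot}{B\cdot}$, the recession-function identity $\psi^\infty=\dd_{Y^\infty\cap\ker B}$ (valid since the PLQ domain of $\psi^*$ is closed), the Fenchel--Young argmax characterization together with the sum rule $\p\psi=N_Y+B$, and the PLQ second-order machinery of \cite[Theorem~13.40]{rw} for part (iii).
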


\section{Multiplier Criticality in Piecewise Linear-Quadratic Settings}\label{crit-def}

In this section we formulate the definitions of critical and noncritical multipliers corresponding to stationary points of the variational system \eqref{VS} with the piecewise linear-quadratic term \eqref{theta}, establish
an equivalent description of criticality entirely via the given data of \eqref{VS}, and then present two examples illustrating the calculation of critical and noncritical multipliers for this setting.

Given a point $\bar{x}\in\mathbb{R}^n$, define the set of {\em Lagrange multipliers} associated with $\ox$ by
\begin{equation}\label{lag}
\Lambda(\bar{x}):=\big\{\lm\in\mathbb{R}^m\big|\;\Psi(\bar{x},\lm)= 0,\;\lm\in\partial\thy\big(\Phi(\bar{x})\big)\big\}.
\end{equation}
If $(\ox,\olm)$ is a solution to the variational system \eqref{VS}, we clearly get $\olm\in\Lambda(\ox)$. Furthermore, it is not hard to check that the inclusion $\olm\in\Lambda(\ox)$ ensures that $\ox$ is a {\em stationary point} of \eqref{VS} in the sense that it satisfies
the condition
\begin{equation}\label{stat}
0\in f(\bar{x})+\partial\big(\thy\circ\Phi\big)(\bar{x}).
\end{equation}
Suppose from now on that $\Lambda(\xb)\ne\emp$, which is ensured, e.g., by any constraint qualification condition in problems of constrained optimization. The following definitions of critical and noncritical multipliers for \eqref{VS},
are just specifications of those from \cite{ms17}, given there for general variational systems with the subsequent implementation for the case of a convex piecewise linear function $\th$.
It is worth noticing that the function $\th$ from \eqref{theta} with $B=0$ is  convex piecewise linear, namely its epigraph is a convex polyhedral set, and so
can be covered by the results already established in \cite{ms17}; however, when $B\neq 0$, it is a convex  piecewise linear-quadratic function and requires different techniques
to achieve similar results.

\begin{Def}{\bf(critical and noncritical multiplies in variational systems).}\label{crit} Let $(\ox,\olm)$ be  a solution to the variational system \eqref{VS}. We say that $\bar{\lm}\in\Lambda(\bar{x})$ is a {\sc critical Lagrange multiplier} for \eqref{VS} corresponding to $\ox$ if there exists a nonzero vector $\xi\in\mathbb{R}^n$ such that
\begin{equation}\label{cm}
0\in\nabla_x\Psi(\bar{x},\bar{\lm})\xi+\nabla\Phi(\bar{x})^*D\partial\thy\big(\Phi(\bar{x}),\bar{\lm}\big)\big(\nabla\Phi(\bar{x}\big)\xi).
\end{equation}
A given multiplier $\bar{\lm}\in\Lambda(\bar{x})$ is {\sc noncritical} for \eqref{VS} corresponding to $\ox$ if the generalized equation \eqref{cm} admits only the trivial solution $\xi=0$.
\end{Def}

Applying the representations of Theorem~\ref{gdt} for the graphical derivative in \eqref{cm} gives us an equivalent description of critical and noncritical multipliers from Definition~\ref{crit},
 expressed entirely in terms of the initial data of \eqref{VS}.

\begin{Th}{\bf(equivalent description of criticality via piecewise linear-quadratic penalties).}\label{desc} Let $(\bar{x},\bar{\lm})$ be a solution to the variational system \eqref{VS} with the term $\thy$ taken from \eqref{theta}. Denoting $\oz:=\Phi(\ox)$ and $\ok:=K_Y(\olm,\zb-B\olm)$ via the critical cone \eqref{cri3}, we have that the multiplier $\bar{\lm}$ corresponding to $\ox$ is critical for \eqref{VS} if and only if the system of relationships
\begin{equation}\label{cri}
\begin{cases}
\nabla_x\Psi(\bar{x},\bar{\lm})\xi+\nabla\Phi(\bar{x})^*\eta=0,\quad\inp{\nabla\Phi(\bar{x})\xi-B\eta}{\eta}=0,\\
\nabla\Phi(\bar{x})\xi-B\eta\in\ok^*,\;\mbox{ and }\;\eta\in\ok
\end{cases}
\end{equation}
admits a solution $(\xi,\eta)\in\mathbb{R}^n\times\mathbb{R}^m$ with $\xi\ne 0$. Accordingly, $\olm$ is a noncritical multiplier in this setting if and only if we have $\xi=0$ for any solution $(\xi,\eta)$ to \eqref{cri}.
\end{Th}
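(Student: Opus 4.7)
The plan is to take the abstract subdifferential inclusion in Definition~\ref{crit} and unfold it step by step using the two explicit representations established in Theorem~\ref{gdt}. Nothing new has to be proved: once the graphical derivative is reduced to a subdifferential, and the subdifferential to a normal-cone membership, the componentwise system \eqref{cri} falls out from the standard calculus of convex polyhedral cones, and the argument is reversible throughout.

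First, I would split the sum inclusion in \eqref{cm}. It is equivalent to asserting the existence of a vector $\eta\in D\partial\thy(\Phi(\ox),\olm)(\nabla\Phi(\ox)\xi)$ satisfying $\nabla_x\Psi(\ox,\olm)\xi+\nabla\Phi(\ox)^*\eta=0$, which is already the first equation of \eqref{cri}. The only remaining task is to recast the membership of $\eta$ into the last three relations of \eqref{cri}. Applying \eqref{gdr} from Theorem~\ref{gdt}(iii), this membership becomes $\eta\in\partial\thk(u)$ with $u:=\nabla\Phi(\ox)\xi$. Since $\thk$ is of the same dualizing form \eqref{theta} but with $Y$ replaced by the convex polyhedral cone $\ok=K_Y(\olm,\oz-B\olm)$, formula \eqref{fo} of Theorem~\ref{gdt}(ii) specializes to $\partial\thk(u)=(N_\ok+B)^{-1}(u)$, which is in turn equivalent to $u-B\eta\in N_\ok(\eta)$.

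Finally, I would invoke the classical identity $N_\ok(\eta)=\ok^*\cap\{\eta\}^\perp$, valid for every $\eta$ in the convex cone $\ok$, to split $u-B\eta\in N_\ok(\eta)$ into three independent pieces: $\eta\in\ok$, $\nabla\Phi(\ox)\xi-B\eta\in\ok^*$, and the orthogonality $\inp{\nabla\Phi(\ox)\xi-B\eta}{\eta}=0$. Combined with the first equation already obtained, this produces exactly the system \eqref{cri}. Because each step in the chain is a logical equivalence, the converse implication comes for free: any solution $(\xi,\eta)$ of \eqref{cri} with $\xi\ne 0$ retraces the equivalences back to a vector certifying \eqref{cm}, so $\olm$ is critical in the sense of Definition~\ref{crit}.

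I do not anticipate a genuine obstacle in carrying this out; the result is essentially a bookkeeping exercise that makes the graphical-derivative formula \eqref{gdr} explicit. The only care point is the verification that $\ok$ is genuinely a convex (polyhedral) cone so that the decomposition of $N_\ok(\eta)$ into its polar and orthogonality components is legitimate; this is immediate from the definition \eqref{cri3} of the critical cone together with the polyhedrality of $Y$.
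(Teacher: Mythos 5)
Your proposal is correct and follows essentially the same path as the paper's proof: both apply \eqref{gdr} to reduce the graphical derivative to $\partial\thk$, then use the representation $(N_\ok+B)^{-1}$ from \eqref{fo}, and finally decompose $\nabla\Phi(\ox)\xi-B\eta\in N_\ok(\eta)$ via the standard identity $N_\ok(\eta)=\ok^*\cap\{\eta\}^\perp$ for a convex cone $\ok$. The reversibility you note is exactly how the paper handles both directions, so there is nothing to add.
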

\begin{proof}  To achieve the claimed equivalencies, we require to calculate the graphical derivative $D\partial\thy$ in \eqref{cm} for the function $\thy$ given in \eqref{theta}. First we use formula \eqref{gdr} from Theorem~\ref{gdt}(iii), which yields
\begin{equation*}
D\partial\thy(\bar{z},\bar{\lm})\big(\nabla\Phi(\bar{x})\xi\big)=\partial\thk\big(\nabla\Phi(\bar{x})\xi\big).
\end{equation*}
On the other hand, the second expression of $\partial\thk$ in \eqref{fo} of Theorem~\ref{gdt}(ii) shows that
\begin{equation*}
\partial\thk\big(\nabla\Phi(\bar{x})\xi\big)=\big(N_{\ok}+B\big)^{-1}\big(\nabla\Phi(\bar{x})\xi\big).
\end{equation*}
Putting these representations together, we arrive at
\begin{equation}\label{desc1}
D\partial\thy(\bar{z},\bar{\lm})\big(\nabla\Phi(\bar{x})\xi\big)=\big(N_{\ok}+B\big)^{-1}\big(\nabla\Phi(\bar{x})\xi\big).
\end{equation}
Picking further any vector $\eta$ from the set on the left-hand side of \eqref{desc1} gives us therefore that
$\eta\in(N_{\ok}+B\big)^{-1}\big(\nabla\Phi(\bar{x})\xi)$ and so $\nabla\Phi(\ox)\xi-B\eta\in N_{\ok}(\eta)$. Since $\ok$ is a convex cone, the latter inclusion is equivalent to the conditions
\begin{equation*}
\inp{\nabla\Phi(\bar{x})\xi- B\eta}{\eta}=0,\quad\nabla\Phi(\bar{x})\xi-B\eta\in\ok^*,\quad \eta\in\ok.
\end{equation*}
Finally, we substitute the obtained descriptions of $\eta\in D\partial\thy(\bar{z},\bar{\lm})(\nabla\Phi(\bar{x})\xi)$ into \eqref{cm} and thus clearly verify both assertions of the theorem.
\end{proof}\vspace*{0.05in}

Next we present two examples, which demonstrate how to use the descriptions of Theorem~\ref{desc} to explicitly determine critical and noncritical multipliers and illustrate in this way some characteristic features of multiplier criticality.

\begin{Example}{\bf(calculating critical and noncritical multipliers).}\label{ex1}
{\rm Consider the multidimensional case of \eqref{VS} with $\thy$ from \eqref{theta}, where $B=I_m=:I$ is the $m\times m$ identity matrix, and where the convex polyhedral set $Y$ is the nonnegative orthant in $\R^m$, i.e.,
\begin{equation*}
Y=\R^m_+:=\big\{y=(y_1,\ldots,y_m)\in\R^m\big|\;y_i\ge 0\;\mbox{ for all }\;i=1,\ldots,m\big\}.
\end{equation*}
Thus the function $\theta_{Y,B}$ from \eqref{theta} reduces in this case to
\begin{equation*}
\dps\theta_{\Rm_+,I}(u)=\sup_{y\in\Rm_+}\Big\{\inp{y}{u}-\frac{1}{2}\inp{y}{y}\Big\},\quad u\in\R^m.
\end{equation*}
For any $\xb\in\Rn$ and $\zb:=\Phi(\xb)$, by Theorem~\ref{gdt}(ii) we have that $\lm\in\partial\theta_{\Rm_+,I}(\zb)$ if and only if $\zb-B\lm\in N_{\Rm_+}(\lm)=\Rm_-\cap\lm^\perp$. Denoting $\zb-\lm$ by $\lmt$, the latter inclusion is equivalent to the following system
of equations and inclusions:
\begin{equation}\label{sgr}
\begin{cases}
\lm+\lmt=\zb\\
\inp{\lm}{\lmt}=0\\
\lm\in\Rm_+\\
\lmt\in\Rm_-
\end{cases}
\end{equation}
It is not hard to see that for each fixed $\xb$ and $\zb=\Phi(\xb)$ this system has only one solution, which implies that the set of Lagrange multipliers has at most one element.

We now give two specific examples of mappings $f$ and $\Phi$, where one has a noncritical multiplier and the other has a critical multiplier. First, let $f(x):=x$ and $\Phi(x):=(x_1,0,\ldots,0)\in\Rm$ for all $x=(x_1,\ldots,x_n)\in\Rn$, and let $\xb:=0\in\Rn$. Combining \eqref{sgr} with the fact that $\Psi(\xb,\lm)=(\lm_1,0,\ldots,0)\in\Rn$ implies that the unique Lagrange multiplier is $\olm=0$. Then we calculate the critical cone $\ok=K_Y(0,\oz)$ in Theorem~\ref{desc} with $\zb=\Phi(\xb)=0$ and its dual cone $\ok^*$ by, respectively,
\begin{equation*}
\ok=T_{\Rm_+}(0)\cap\{\zb\}^\perp=\Rm_+\;\mbox{ and }\;\ok^*=\span\{\zb\}+N_{\Rm_+}(0)=\Rm_-.
\end{equation*}
It follows from Theorem~\ref{desc} that the unique Lagrange multiplier $\olm=0$ is noncritical if and only if the system of equations and inclusions
\begin{equation*}
\begin{cases}
\nabla_x\Psi(\bar{x},\bar{\lambda})\xi+\nabla\Phi(\bar{x})^*\eta=0\\
\inp{\nabla\Phi(\bar{x})\xi-\eta}{\eta}=0\\
\nabla\Phi(\bar{x})\xi-\eta\in\Rm_-\\
\eta\in\Rm_+
\end{cases}
\end{equation*}
admits the only solution pairs $(\xi,\eta)\in\Rn\times\Rm$ with $\xi=0$. Denoting $\zeta:=\nabla\Phi(\xb)\xi-\eta$, the above system can be equivalently rewritten as
\begin{equation}\label{crsys}
\begin{cases}
\nabla_x\Psi(\bar{x},\bar{\lambda})\xi+\nabla\Phi(\bar{x})^*\eta=0\\
\nabla\Phi(\xb)\xi-\eta-\zeta=0\\
\inp{\zeta}{\eta}=0\\
\zeta\in\Rm_-\\
\eta\in\Rm_+.
\end{cases}
\end{equation}
Since $\nabla_x\Psi(\xb,\olm)\xi=\xi$, $\nabla\Phi(\xb)\xi=(\xi_1,0,\ldots,0)\in\Rm$, and $\nabla\Phi(\xb)^*\eta=(\eta_1,0,\ldots,0)\in\Rn$ for any $\eta=(\eta_1,\ldots,\eta_m)\in\Rm$, it can be easily checked that the latter system has the unique solution pair $(\xi,\eta)=(0,0)$. This tells us that $\olm=0$ is a noncritical multiplier.

Next we consider the case where $\Phi(x):=(x_1,0,\ldots,0)\in\Rm$ as before while $f(x):=(x_1,\ldots,x_{n-1},0)\in\Rn$ for all $x=(x_1,\ldots,x_n)\in\Rn$. Proceeding similarly to the previous case shows that $\olm=0$ is the unique Lagrange multiplier with the same critical cone $\mcK$. In this setting we have $\nabla_x\Psi(\xb,\olm)\xi=(\xi_1,\ldots,\xi_{n-1},0)\in\Rn$, and therefore system \eqref{crsys} reduces to
\begin{equation*}
\begin{cases}
(\xi_1,\ldots,\xi_{n-1},0)+(n_1,0,\ldots,0)=0\\
\nabla\Phi(\xb)\xi-\eta-\zeta=0\\
\inp{\zeta}{\eta}=0\\
\zeta\in\Rm_-\\
\eta\in\Rm_+.
\end{cases}
\end{equation*}
It shows that all the pairs $(\xi,\eta)$ with $\eta=0$ and $\xi=(0,\ldots,0,\xi_n)$ for $\xi_n\in\R$ are solutions to the above system. Thus the multiplier $\olm=0$ is critical.

In Section~\ref{comp-opt} we  revisit this example in the optimization framework; see Example~\ref{ex1a}.}
\end{Example}

The next two-dimensional example presents a simple linear-quadratic variational system of type \eqref{VS} with $\thy$ from \eqref{theta} such that a stationary point therein is associated with both critical and noncritical Lagrange multipliers.

\begin{Example}{\bf(variational systems with both critical and noncritical multipliers corresponding to a given stationary point).}\label{ex2} {\rm Specify the data of \eqref{theta} and \eqref{VS} as follows:
\begin{equation}\label{kd01}
Y:=\R^2_+,\quad\dps B:=\begin{pmatrix}1&0\\0&0\end{pmatrix},\quad f(x):=-x,\;\mbox{ and }\;\Phi(x):=(0,x^2)\;\mbox{ for }\;x\in\R.
\end{equation}
Thus we have in \eqref{VS} that $\Psi(x,\lm)=f(x)+\nabla\Phi(x)^*\lm=-x+2x\lm_2$ for any $x\in\R$ and $\lm=(\lm_1,\lm_2)\in\R^2$.
By Theorem~\ref{gdt}(i), we obtain $ \dom\thy=\R \times \R_-$.
 Since $\p\thy(u)=(N_Y+B)^{-1}(u)$ by Theorem~\ref{gdt}(iii), it is not hard to see $\p\thy(0)=\{0\}\times\R_+$, and so $\Lambda(\ox)=\{0\}\times\R_+$ with $\ox:=0$.
 Then for any $\lm=(\lm_1,\lm_2)\in\Lambda(\ox)$ we get $\lm_1=0$ and $\lm_2\ge 0$. On the other hand, conditions \eqref{crit} from Theorem~\ref{desc}  read now as
\begin{equation*}
(2\lm_2-1)\xi= 0,\;\;\inp{-B\eta}{\eta}=0,\;\;-B\eta\in\ok^*,\;\;\eta\in\ok.
\end{equation*}
This tells us that if $\lm_2\ne\frac{1}{2}$, the latter system admits only the solution $\xi=0$, and thus the obtained Lagrange multiplier $\lm$ is noncritical. In the case where $\lm_2=\frac{1}{2}$, this system admits nontrivial solutions $\xi$, and so the Lagrange multiplier $\lm=(0,\frac{1}{2})$ is critical.}
\end{Example}

\section{Uniqueness of Lagrange Multipliers and Isolated Calmness}\label{unique}

This section is devoted to the study of uniqueness of Lagrange multipliers corresponding to given stationary points of the variational systems \eqref{VS} with piecewise linear-quadratic penalties \eqref{theta}. This issue is definitely of its own interest while seems to be independent of multiplier criticality. However, the methods we develop for the uniqueness study and the obtained conditions for it occur to be closely related to the subsequent characterizations of noncritical multiplies as well as their deeper understanding and specification.

First we recall some ``at-point" (vs.\ ``around/neighborhood") stability properties of set-valued mappings that have been recognized in variational  analysis; see, e.g., \cite{dr,m18,rw} with the references and commentaries therein.

It is said that a mapping $F\colon\R^n\rightrightarrows\R^m$ is {\em calm} at $(\ox,\oy)\in\gph F$ if there exist a constant $\ell\ge 0$ and neighborhoods $U$ of $\xb$ and $V$ of $\yb$ such that
\begin{equation}\label{calm}
F(x)\cap V\subset F(\xb)+\ell\|x-\xb\|\B\;\textrm{ for all }\;x\in U,
\end{equation}
where $\B$ stands for the closed unit ball of the space in question. If \eqref{calm} is replaced by
\begin{equation}\label{iso-calm}
F(x)\cap V \subset\big\{\yb\big\}+\ell\n{x-\xb}\B\;\textrm{ for all }\;x\in U,
\end{equation}
then the corresponding property is known as {\em isolated calmness} of $F$ at $(\ox,\oy)$. If the $\gph F$ is locally closed at $(\ox,\oy)$, the latter property admits the graphical derivative characterization
\begin{equation}\label{grdcr}
DF(\xb,\yb)(0)=\{0\}
\end{equation}
known as the {\em Levy-Rockafellar criterion}; see the commentaries to \cite[Theorem~4E.1]{dr}.

Finally, $F$ enjoys the {\em robust isolated calmness}  property at $(\ox,\oy)$ if in addition to \eqref{iso-calm} we have $F(x)\cap V\ne\emp$. This name is coined quite recently \cite{dsz}, while the property itself has been actually used in optimization over the years; see the discussions in \cite{dsz,ms17}.\vspace*{0.05in}

In this section we employ the calmness and isolated calmness properties for characterizations of uniqueness of Lagrange multipliers in \eqref{VS} with the piecewise linear-quadratic term \eqref{theta}. Robust isolated calmness is used in the last section of the paper.

Using the data of \eqref{VS}, consider the set-valued mapping $G\colon\R^n\times\R^m\rightrightarrows\R^n\times\R^m$ given by
\begin{equation}\label{g}
G(x,\lm):=\begin{pmatrix}
\Psi(x,\lm)\\
-\Phi(x)\end{pmatrix}+\begin{pmatrix}
0\\
(\partial\thy)^{-1}(\lm)
\end{pmatrix}\;\mbox{ for  all }\;(x,\lm)\in\R^n\times\R^m.
\end{equation}
Then fix a point $\ox\in \R^n$ and define the parameterized {\em multiplier map} $M_{\xb}\colon\Rn\times\Rm\rightrightarrows\Rm$ associated with $\xb$ by
\begin{equation} \label{lmm}
M_{\xb}(p_1,p_2):=\big\{\lm\in\Rm\big|\;(p_2,p_2)\in G(\xb,\lm)\big\},\quad(p_1,p_2)\in\R^n\times\Rm.
\end{equation}
We have $M_{\xb}(0,0)=\Lambda(\xb)$ for the Lagrange multiplier set \eqref{lag} of the unperturbed system \eqref{VS}.

The next theorem characterizes uniqueness of Lagrange multipliers in variational systems \eqref{VS} with the term $\thy$ from \eqref{theta} via both calmness and isolated calmness properties of the multiplier map \eqref{lmm}, which are equivalent to each other in this case and are characterized in turn by a novel {\em dual qualification condition}.

\begin{Th}{\bf(characterizations of uniqueness of Lagrange multipliers in variational systems).}\label{uniq}
Let $(\bar{x},\bar{\lm})$ be a solution to the variational system  \eqref{VS} with $\thy$ taken from \eqref{theta}.
Then the following properties are equivalent:\vspace{-0.2 cm}
\begin{itemize}[noitemsep]
\item[\bf(i)] $\Lambda(\ox)=\{\olm\}$.
\item[\bf(ii)]$M_{\xb}$ is calm at $\big((0,0),\olm\big)$ and $\Lambda(\xb)=\{\olm\}$.
\item[\bf(iii)] $M_{\xb}$ is isolatedly calm at $\big((0,0),\olm\big)$.
\item[\bf(iv)]We have the dual qualification condition
\begin{equation}\label{dqc}
D\p\thy(\zb,\olm)(0)\cap \ker\nabla\Phi(\xb)^*=\{0\},
\end{equation}
where $D\p\thy(\zb,\olm)$ is calculated by \eqref{desc1}.
\end{itemize}
\end{Th}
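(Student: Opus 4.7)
My plan is to establish the cycle (i) $\Rightarrow$ (iv) $\Rightarrow$ (iii) $\Rightarrow$ (ii) $\Rightarrow$ (i); the last two implications are soft, while the first two carry the analytic content.

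For (iv) $\Leftrightarrow$ (iii) I would apply the Levy--Rockafellar criterion \eqref{grdcr} to $M_{\xb}$ after computing
\[
DM_{\xb}\big((0,0),\olm\big)(0,0)=D\p\thy(\zb,\olm)(0)\cap\ker\nabla\Phi(\xb)^*.
\]
From \eqref{g} and \eqref{lmm}, $\gph M_{\xb}$ consists of the triples $((p_1,p_2),\lm)$ with $p_1=\Psi(\xb,\lm)$ and $\lm\in\p\thy(p_2+\Phi(\xb))$. Taking a tangent sequence at $((0,0),\olm)$ and using the affinity $\Psi(\xb,\lm)-\Psi(\xb,\olm)=\nabla\Phi(\xb)^*(\lm-\olm)$ peels off the linear constraint $q_1=\nabla\Phi(\xb)^*w$, while the remaining data reproduce $w\in D\p\thy(\zb,\olm)(q_2)$; specializing to $(q_1,q_2)=(0,0)$ yields the displayed formula. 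Local closedness of $\gph M_{\xb}$ needed for \eqref{grdcr} follows from continuity of $\Psi$ and closedness of $\gph\p\thy$ for convex $\thy$.

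Next, (iii) $\Rightarrow$ (ii) splits in two. Calmness is immediate from \eqref{iso-calm}. For uniqueness, setting $(p_1,p_2)=(0,0)$ in \eqref{iso-calm} forces $\Lambda(\xb)\cap V\subseteq\{\olm\}$, so $\olm$ is isolated in $\Lambda(\xb)$. Since $\Lambda(\xb)$ is the intersection of the convex set $\p\thy(\zb)$ with the affine slice $\{\lm\mid\nabla\Phi(\xb)^*\lm=-f(\xb)\}$, it is convex, and any isolated point of a convex set coincides with the whole set; hence $\Lambda(\xb)=\{\olm\}$. The implication (ii) $\Rightarrow$ (i) is trivial.

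The most delicate step is (i) $\Rightarrow$ (iv), which I would argue by contraposition. Assume that some nonzero $\eta$ lies in $D\p\thy(\zb,\olm)(0)\cap\ker\nabla\Phi(\xb)^*$. By \eqref{desc1} this amounts to $-B\eta\in N_{\ok}(\eta)$, equivalently $\eta\in\ok$, $-B\eta\in\ok^*$, and $\inp{B\eta}{\eta}=0$; in addition $\nabla\Phi(\xb)^*\eta=0$. I would then show that $\olm+t\eta\in\Lambda(\xb)$ for every sufficiently small $t>0$, contradicting (i). The identity $\Psi(\xb,\olm+t\eta)=0$ is immediate. For $\olm+t\eta\in\p\thy(\zb)$, equivalently $\zb-B\olm-tB\eta\in N_Y(\olm+t\eta)$, I would invoke the classical local reduction of the normal-cone graph of a convex polyhedron at $(\olm,\zb-B\olm)$: near that base point, $(\olm+u,\zb-B\olm+v)\in\gph N_Y$ whenever $u\in\ok$, $v\in\ok^*$, and $\inp{u}{v}=0$. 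Substituting $u=t\eta$, $v=-tB\eta$ and using the properties of $\eta$ just recorded verifies all three conditions. The main obstacle is exactly this polyhedral reduction step: one has to ensure that the representation of $\gph N_Y$ through the critical cone $\ok$ is valid on a neighborhood large enough to contain $(t\eta,-tB\eta)$ for small $t>0$, which is standard for polyhedral $Y$ but demands a careful citation and bookkeeping of the neighborhoods.
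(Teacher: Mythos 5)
Your proof is correct and follows essentially the same route as the paper: the equivalence (iii) $\Leftrightarrow$ (iv) is obtained via the Levy--Rockafellar criterion after computing $DM_{\xb}((0,0),\olm)(0,0)$, the implication (i) $\Rightarrow$ (iv) uses the same contradiction argument via the polyhedral Reduction Lemma (\cite[Lemma~2E.4]{dr}) to produce $\olm+t\eta\in\Lambda(\xb)$, and (iii) $\Rightarrow$ (ii) $\Rightarrow$ (i) rests on convexity of $\Lambda(\xb)$ exactly as in the paper. The only cosmetic difference is that you chain the implications as a single cycle (i)$\Rightarrow$(iv)$\Rightarrow$(iii)$\Rightarrow$(ii)$\Rightarrow$(i), whereas the paper proves (iii)$\Leftrightarrow$(iv) separately and then closes the loop via (i)$\Rightarrow$(iv), (iii)$\Rightarrow$(ii), (ii)$\Rightarrow$(i); the analytic content is identical.
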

\begin{proof}
Denoting $\oz:=\Phi(\ox)$ as above, we begin with proving the equivalence (iii)$\Longleftrightarrow$(iv). To proceed, observe that the graph of $M_{\ox}$ is closed and deduce from \eqref{grdcr} that $M_{\ox}$ is isolatedly calm at $((0,0),\olm)$  if and only if
$DM_{\ox}\big((0,0),\olm\big )(0,0)=\{0\}$. It is not hard to check that $\eta\in DM_{\ox}\big((0,0),\olm\big )(0,0)$ amounts to saying that
$\eta$ is a solution to the system
\begin{equation*}
\left[\begin{array}{c}
0\\
0
\end{array}
\right]\in\left[\begin{array}{c}
\nabla\Phi(\ox)^*\eta\\
0
\end{array}
\right]+\left[\begin{array}{c}
0\\
D(\thy)^{-1}(\olm,\oz)(\eta)
\end{array}
\right].
\end{equation*}
This tells us that $\eta$ is a solution to the above system if and only if
\begin{equation*}
\eta\in D\p\thy(\zb,\olm)(0)\cap\ker\nabla\Phi(\xb)^*.
\end{equation*}
Combining these facts verifies the equivalence between conditions (iii) and (iv).

Next we show that (i)$\Longrightarrow$(iv). Assume on the contrary that the dual qualification condition \eqref{dqc} fails while (i) holds, and so  find an element
\begin{equation*}
\eta\in D\p\thy(\zb,\olm)(0)\cap\ker\nabla\Phi(\xb)^*\;\mbox{ such that }\;\eta\ne 0.
\end{equation*}
Since $\Psi(\xb,\olm+t\eta)=0$ for any $t>0$, we get from $\eta\in D\p\thy(\zb,\olm)(0)$ and \eqref{gdr} that $\eta\in\p\thk(0)$, and hence $-B\eta\in N_{\ok}(\eta)$ by Theorem~\ref{gdt}(ii). Choosing $t$ to be sufficiently small and employing the Reduction Lemma from \cite[Lemma~2E.4]{dr} ensure the existence of a neighbored $U$ of $(0,0)\in \R^m\times \R^m$ such that
$$
t(\eta, -B\eta)\in [\gph N_\ok]\cap U=\big[\gph N_Y- (\bar\lm,\oz-B\olm)\big]\cap U.
$$
This in turn results in  $\zb-B\olm-tB\eta\in N_Y(\olm+t\eta)$, which yields by \eqref{fo} the inclusion $\olm+t\eta\in\p\thy(\zb)$. Combining the latter with $\Psi(\xb,\olm+t\eta)=0$ results in $\olm+t\eta\in\Lambda(\xb)$. However, we have $\eta\ne 0$ thus $\olm+t\eta\ne\olm$ for any $t>0$, which contradicts (i) and so verifies the claimed implication (i)$\Longrightarrow$(iv).

To show further that the isolated calmness of $M_{\ox}$ at $\big((0,0),\olm\big)$ imposed in (iii) yields (ii), it suffices to check that $\Lambda(\xb)=\{\olm\}$.  Indeed, the assumed isolated calmness allows us to find a neighborhood $O$ of $\bar \lm$ such that $M_{\ox}(0,0)\cap O=\{\olm\}$, which tells us by the convex-valuedness of $M_{\ox}$ that $M_{\ox}(0,0)=\{\olm\}$. Combining the latter with $M_{\xb}(0,0)=\Lambda(\xb)$ verifies (ii). Since (ii) obviously implies (i), we complete the proof of the theorem.
\end{proof}\vspace*{0.05in}

The next example reveals that the dual qualification condition \eqref{dqc} is essential for the uniqueness of Lagrange multipliers in Theorem~\ref{uniq}.

\begin{Example}{\bf(nonuniqueness of Lagrange multipliers under failure of the dual qualification condition).}\label{manymul} {\rm Consider the variational system \eqref{VS} with term \eqref{theta}, where $Y$ and $B$ are taken from \eqref{kd01}, while $\Phi\colon\R^2\to\R^2$ is defined by  $\Phi(x_1,x_2):=(x_1,0)$ and $f\colon\R^2\to\R^2$ is defined by $f(x)=0$ for all $x\in\R^2$. It is shown in Example~\ref{ex2} that $\dom\thy=\R\times \R_-$. Letting $\xb:=(0,0)$, we get by the direct calculation that
\begin{equation*}
\p\thy(\xb)=\{0\}\times\R_+\;\mbox{ and }\;\Psi(\ox,\lm)=\nabla\Phi(x)^*\lm=(\lm_1,0),
\end{equation*}
and so $\Lambda(\xb)=\{0\}\times\R_+$, which is not a singleton.

Let us now show that the dual qualification condition fails in this setting. Having $\ker\nabla\Phi(\xb)^*=\{0\}\times\R$ and choosing $\olm:=(0,0)$ give us the critical cone
\begin{equation*}
\ok=T_Y(\olm)\cap\big\{\Phi(\ox)-B\olm\big\}^\perp=Y,
\end{equation*}
and so $\p\thk(0,0)=\{0\}\times\R_+$. Combining it with \eqref{gdr}, we arrive at
\begin{equation*}
\p\thk(0,0)\cap\ker\nabla\Phi(\xb)^*=D\p\thy(\zb,\olm)(0,0)\cap\ker\nabla\Phi(\xb)^*=\{0\}\times\R_+\ne\{(0,0)\},
\end{equation*}
which demonstrates the failure of the dual qualification condition \eqref{dqc}.}
\end{Example}

\section{Characterizations of Noncritical Multipliers}\label{noncrit-char}

In this section we derive major characterizations of noncritical multipliers for the piecewise linear-quadratic variational systems \eqref{VS} in terms of semi-isolated calmness and error bounds.

Using the mapping $G$ from \eqref{g}, define the {\em solution map} $S\colon\mathbb{R}^n\times\mathbb{R}^m\rightrightarrows\mathbb{R}^n\times\mathbb{R}^m$ for the {\em canonical perturbation} of system \eqref{VS} by
\begin{equation}\label{s}
S(p_1,p_2):=\big\{(x,\lm)\in\mathbb{R}^n\times\mathbb{R}^m\big|\;(p_1,p_2)\in G(x,\lm)\big\}.
\end{equation}

The property of {\em semi-isolated calmness} used in \eqref{upper} was introduced in \cite{ms17} for solution maps to general variational systems with a product structure of values as in \eqref{s}. The reader can see that for such mappings the semi-isolated calmness of the variational systems of type \eqref{VS} occupies an intermediate position between the calmness and isolated calmness.

In what follows we use the notation $\dist(x;\O)$ for the distance between a point $x\in\R^n$ and a set $\O\subset\R^n$, $\B_\ve(x)$ for the closed ball centered at $x\in\R^n$ with radius $\ve>0$, and
\begin{equation}\label{pr}
P\ph(x):={\rm argmin}\Big\{\ph(u)+\frac{1}{2}\|x-u\|^2\Big|\;u\in\R^n\Big\},\quad x\in\R^n,
\end{equation}
for the {\em proximal mapping} $P\ph\colon\R^n\tto\R^n$ associated with a function $\ph\colon\R^n\to\oR$.

\begin{Th}{\bf(major characterizations of noncritical multipliers in variational systems).}\label{charc} Let $(\bar{x},\bar{\lm})$ be a solution to the variational system \eqref{VS} with the piecewise linear-quadratic term \eqref{theta}. Then the following conditions are equivalent:
\begin{itemize}[noitemsep]
\item[\bf(i)] The Lagrange multiplier $\bar{\lm}$ is noncritical for \eqref{VS} corresponding to $\ox$.
\item[\bf(ii)]
There exist numbers $\ve>0$, $\ell\ge 0$ and neighborhoods $U$ of $0\in\R^n$ and $W$ of $0\in\R^m$ such that for any $(p_1,p_2)\in U\times W$ the following inclusion holds:
\begin{equation}\label{upper}
S(p_1,p_2)\cap\B_\ve(\ox,\olm)\subset\big[\{\ox\}\times\Lambda(\ox)\big]+\ell\big(\|p_1\|+\|p_2\|\big)\B.
\end{equation}
\item[\bf(iii)] There exist numbers $\ve>0$ and $\ell\ge 0$ such that the error bound estimate
\begin{equation*}\label{subr}
\|x-\ox\|+\dist\big(\lm;\Lambda(\ox)\big)\le\ell\big(\|\Psi(x,\lm)\|+\dist\big(\Phi(x);(\partial\thy)^{-1}(\lm)\big)\big)
\end{equation*}
holds for any $(x,\lm)\in\B_\ve(\ox,\olm)$ in terms of the inverse subdifferential of $\thy$.
\item[\bf(iv)]  There are numbers $\ve>0$ and $\ell\ge 0$ such that the error bound estimate
\begin{equation}\label{subr3}
\|x-\ox\|+\dist\big(\lm;\Lambda(\ox)\big)\le\ell\big(\|\Psi(x,\lm)\|+\|\Phi(x)-(P\thy)(\lm+\Phi(x))\|\big)
\end{equation}
holds for any $(x,\lm)\in\B_\ve(\ox,\olm)$ in terms of the proximal mapping $P\thy$ from \eqref{pr}.
\end{itemize}
\end{Th}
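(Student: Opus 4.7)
My plan is to establish the chain (iv) $\Longleftrightarrow$ (iii) $\Longleftrightarrow$ (ii) $\Longleftrightarrow$ (i), with the last equivalence carrying the substantive content; the first two are translations of one local estimate into different languages. For (ii) $\Longleftrightarrow$ (iii), I would unpack the definition of $G$ in \eqref{g}: $(p_1,p_2)\in G(x,\lm)$ precisely when $p_1=\Psi(x,\lm)$ and $\Phi(x)+p_2\in(\p\thy)^{-1}(\lm)$, so after a nearest-point selection of $p_2$ realizing $\dist(\Phi(x);(\p\thy)^{-1}(\lm))$, the semi-isolated calmness inclusion \eqref{upper} becomes the error bound in (iii). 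For (iii) $\Longleftrightarrow$ (iv), I would invoke the resolvent identity $(I+\p\thy)^{-1}=P\thy$ (equivalently $\lm\in\p\thy(z)\Longleftrightarrow z=P\thy(z+\lm)$) together with the firm nonexpansiveness of $P\thy$ to conclude that $\dist(\Phi(x);(\p\thy)^{-1}(\lm))$ and $\|\Phi(x)-P\thy(\lm+\Phi(x))\|$ are locally comparable up to a constant near $(\oz,\olm)$.

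For (ii) $\Longrightarrow$ (i) I would argue contrapositively: given a critical pair $(\xi,\eta)$ solving \eqref{cri} with $\xi\neq 0$, set $x_k:=\ox+t_k\xi$ and $\lm_k:=\olm+t_k\eta$ with $t_k\dn 0$. As in the proof of Theorem~\ref{uniq}, the Reduction Lemma lifts the infinitesimal relation $\nabla\Phi(\ox)\xi-B\eta\in N_\ok(\eta)$ encoded in \eqref{cri} to the exact inclusion $\oz-B\olm+t_k(\nabla\Phi(\ox)\xi-B\eta)\in N_Y(\lm_k)$, hence $\oz+t_k\nabla\Phi(\ox)\xi\in(N_Y+B)(\lm_k)$, which by Theorem~\ref{gdt}(ii) gives $\lm_k\in\p\thy(z_k)$ with $z_k:=\oz+t_k\nabla\Phi(\ox)\xi$. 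Taking $p_{1,k}:=\Psi(x_k,\lm_k)$ and $p_{2,k}:=z_k-\Phi(x_k)$ thus produces $(p_{1,k},p_{2,k})\in G(x_k,\lm_k)$, and a second-order Taylor expansion together with the first equation of \eqref{cri} yields $\|p_{1,k}\|+\|p_{2,k}\|=O(t_k^2)=o(t_k)$, while $\|x_k-\ox\|=t_k\|\xi\|$, refuting \eqref{upper} for any constant $\ell$.

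The main implication is (i) $\Longrightarrow$ (ii). Assume \eqref{upper} fails and extract sequences $(x_k,\lm_k)\to(\ox,\olm)$ and $(p_{1,k},p_{2,k})\to(0,0)$ with $(p_{1,k},p_{2,k})\in G(x_k,\lm_k)$ and $t_k:=\|x_k-\ox\|+\dist(\lm_k;\Lambda(\ox))>k(\|p_{1,k}\|+\|p_{2,k}\|)$. Pick $\tilde\lm_k\in\Lambda(\ox)$ realizing the distance, set $\xi_k:=(x_k-\ox)/t_k$ and $\eta_k:=(\lm_k-\tilde\lm_k)/t_k$, and pass to a subsequential unit-norm limit $(\xi,\eta)$. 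A Taylor expansion of $\Psi$ at $(\ox,\tilde\lm_k)$, using the linearity of $\Psi$ in $\lm$, the identities $\Psi(\ox,\tilde\lm_k)=0$ and $\tilde\lm_k\to\olm$, and the bound $p_{1,k}=o(t_k)$, yields $\nabla_x\Psi(\ox,\olm)\xi+\nabla\Phi(\ox)^*\eta=0$. Rescaling the difference of graphical pairs $(\Phi(x_k)+p_{2,k},\lm_k)$ and $(\oz,\tilde\lm_k)$ in $\gph\p\thy$ by $t_k$, and invoking the proto-differentiability of $\p\thy$ (which reduces the strict graphical derivative to the formula \eqref{gdr}), produces $\eta\in\p\thk(\nabla\Phi(\ox)\xi)$. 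Thus $(\xi,\eta)$ solves \eqref{cri}, and noncriticality forces $\xi=0$ with $\|\eta\|=1$.

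The payoff step in this degenerate case is a normal-versus-tangent comparison. The Reduction Lemma, exactly as in the proof of Theorem~\ref{uniq}, upgrades the residual inclusion $\eta\in\p\thk(0)\cap\ker\nabla\Phi(\ox)^*$ to $\olm+s\eta\in\Lambda(\ox)$ for all small $s>0$, so $\eta\in T_{\Lambda(\ox)}(\olm)$. On the other hand, $\eta_k\in N_{\Lambda(\ox)}(\tilde\lm_k)$ by the projection characterization of $\tilde\lm_k$, and outer semicontinuity of the normal cone map of the closed convex set $\Lambda(\ox)$ yields $\eta\in N_{\Lambda(\ox)}(\olm)$. Since $\Lambda(\ox)$ is convex, $T_{\Lambda(\ox)}(\olm)\cap N_{\Lambda(\ox)}(\olm)=\{0\}$, contradicting $\|\eta\|=1$. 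The principal obstacle is exactly this final step: noncriticality alone does not exclude a nonzero $\eta$, and the crux is to exploit the piecewise linear-quadratic structure through \eqref{gdr} and the Reduction Lemma so as to produce actual members of $\Lambda(\ox)$ along direction $\eta$ and thereby collide with the normality coming from the projection.
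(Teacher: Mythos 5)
Your outline of (ii)$\Rightarrow$(i) is essentially the paper's argument (lifted via the Reduction Lemma for $N_Y$; the paper in fact carries this out in primal form for $(x_t,\lm_t)=(\ox+t\xi,\olm+t\eta)$ rather than contrapositively, but the content is the same). The real issue is in (i)$\Rightarrow$(ii), and there is a genuine gap.

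\textbf{The rescaling step does not yield $\eta\in\partial\theta_{\ok,B}(\nabla\Phi(\ox)\xi)$.} You propose to rescale the difference of the graphical pairs $(\Phi(x_k)+p_{2,k},\lm_k)$ and $(\oz,\tilde\lm_k)$, both in $\gph\partial\thy$, by $t_k$, and to conclude from proto-differentiability that the limit $(\nabla\Phi(\ox)\xi,\eta)$ lies in $T_{\gph\partial\thy}(\oz,\olm)$, i.e., $\eta\in\partial\theta_{\ok,B}(\nabla\Phi(\ox)\xi)$. But the anchor $(\oz,\tilde\lm_k)$ moves with $k$; what this rescaling produces is a \emph{paratingent}-type element at $(\oz,\olm)$, not a contingent-tangent element. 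Locally the graph is the union of finitely many polyhedra through $(\oz,\olm)$, say $C_1,\dots,C_N$ after shifting; the two sequences $(\Phi(x_k)+p_{2,k},\lm_k)-(\oz,\olm)$ and $(\oz,\tilde\lm_k)-(\oz,\olm)$ may live in different $C_i$ and $C_j$, so the limit of their normalized difference lies only in the (larger, closed) set $C_i-C_j$, not in $\bigcup_i C_i=T_{\gph\partial\thy}(\oz,\olm)$. The paper avoids this by keeping the base point fixed: it shows $\lm_k-\olm\in\partial\theta_{\ok,B}(z_k-\oz)$ and $\mu_k-\olm\in\partial\theta_{\ok,B}(0)\cap\ker\nabla\Phi(\ox)^*$ (two separate rescalings at $(\oz,\olm)$), passes to the limit inside the \emph{closed Minkowski difference} $\partial\theta_{\ok,B}(\nabla\Phi(\ox)\xi)-[\ker\nabla\Phi(\ox)^*\cap\partial\theta_{\ok,B}(0)]$, decomposes $\tilde\eta=\eta-\eta'$, and only then extracts the component $\eta$ that actually satisfies \eqref{cri}. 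This decomposition is not a cosmetic detail; without it your claimed membership fails.

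\textbf{Once corrected, the payoff step also breaks.} After the decomposition, $\xi=0$ (from noncriticality applied to $(\xi,\eta)$) gives $\eta,\eta'\in\partial\theta_{\ok,B}(0)\cap\ker\nabla\Phi(\ox)^*$, so each generates multipliers and lies in $T_{\Lambda(\ox)}(\olm)$. But the actual limit vector is $\tilde\eta=\eta-\eta'$, and there is no reason for the difference of two tangent vectors to be tangent, since $T_{\Lambda(\ox)}(\olm)$ is a cone, not a subspace. The projection identity gives $\tilde\eta\in N_{\Lambda(\ox)}(\olm)$ correctly, but the needed clash $\tilde\eta\in T_{\Lambda(\ox)}(\olm)\cap N_{\Lambda(\ox)}(\olm)=\{0\}$ cannot be deduced. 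The reason the paper sidesteps this degeneracy entirely is the different normalization $t_k:=\|x_k-\ox\|$ (your $t_k$ adds $\dist(\lm_k;\Lambda(\ox))$): with the paper's $t_k$, the Hoffman lemma bounds $(\lm_k-\mu_k)/t_k$, $\|\xi\|=1$ is automatic, and noncriticality alone produces the contradiction with $\xi\ne 0$. The multiplier part of the semi-isolated calmness is then handled separately by a second Hoffman estimate (the paper's \eqref{pap}), not by the normalization.

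\textbf{Minor point on (iii)$\Leftrightarrow$(iv).} Firm nonexpansiveness gives $\|\Phi(x)-P\thy(\lm+\Phi(x))\|\le 2\,\dist(\Phi(x);(\partial\thy)^{-1}(\lm))$, so (iv)$\Rightarrow$(iii) is fine. The converse bound is not a consequence of nonexpansiveness (e.g., $(\partial\thy)^{-1}(\lm)$ may be empty while the proximal residual is small), so you cannot get (iii)$\Rightarrow$(iv) by the two-sided comparability you invoke. The paper instead proves (ii)$\Leftrightarrow$(iv) directly by the explicit construction $p_2:=P\thy(\lm+\Phi(x))-\Phi(x)$, $p_1:=\Psi(x,\lm-p_2)$, which yields $(x,\lm-p_2)\in S(p_1,p_2)$ and the desired estimates.
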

\begin{proof} Let us first verify that (ii) implies (i). Theorem~\ref{desc} reduces it to proving that the semi-isolated calmness property in (ii) ensures that for any solution $(\xi,\eta)\in\mathbb{R}^n\times\mathbb{R}^m$ to the system \eqref{cri} we have $\xi=0$. Define $(x_t,\lm_t):=(\bar{x}+t\xi,\bar{\lm}+t\eta)$ for all $t>0$ and observe that
\begin{equation*}
\begin{array}{ll}
\Psi(x_t,\lm_t)-\Psi(\bar{x},\bar{\lm})&=\big(f(x_t)-f(\bar{x})\big)+\big(\nabla\Phi(x_t)-\nabla\Phi(\bar{x})\big)^*\bar{\lm}+t\nabla\Phi(x_t)^*\eta \\
&=t\nabla f(\bar{x})\xi+o(t)+t\big(\nabla^2\Phi(\bar{x})\xi\big)^*\bar{\lm}+t\nabla\Phi(\bar{x})^*\eta+o(t)\\
&=t\big(\nabla_x\Psi(\bar{x},\bar{\lm})\xi+\nabla\Phi(\bar{x})^*\eta\big)+o(t)=o(t)
\end{array}
\end{equation*}
whenever $t$ is sufficiently small. Letting $p_{1t}:=\Psi(x_t,\lm_t)$ and using $\Psi(\bar{x},\bar{\lm})=0$, we deduce from the last equality above that $p_{1t}=o(t)$. It follows in the similar way that
\begin{equation*}
\Phi(x_t)=\Phi(\bar{x})+t\nabla\Phi(\bar{x})\xi+o(t)\;\mbox{ for all small }\;t>0.
\end{equation*}
Denoting further $z_t:=\Phi(\bar{x})+t\nabla\Phi(\bar{x})\xi$ implies that
\begin{equation*}
z_t-\Phi(x_t)=o(t)\;\mbox{ as }\;t>0,
\end{equation*}
and therefore we get $p_{2t}=o(t)$ for $p_{2t}:=z_t-\Phi(x_t)$.

Let us now prove that $(x_t,\lm_t)\in S(p_{1t},p_{2t})$ for $t>0$ sufficiently small. Since $p_{1t}=\Psi(x_t,\lm_t)$, we only need to verify by Theorem~\ref{gdt}(ii) that
\begin{equation}\label{lm-th}
\lm_t\in\partial\thy(z_t)=(N_Y+B)^{-1}(z_t),\;\mbox{ or equivalently }\;z_t-B\lm_t\in N_Y(\lm_t).
\end{equation}
To proceed with checking \eqref{lm-th}, deduce from \eqref{cri} that
\begin{equation*}
\eta\in\ok=K_Y(\olm,\zb-B\olm)=T_Y(\bar{v})\cap\{\bar{z}-B\bar{\lm}\}^\perp.
\end{equation*}
Denoting $\lm_t:=\olm+t\eta$ and remembering that $Y$ is a convex polyhedral set, we conclude that $\lm_t\in Y$ for all $t>0$ sufficiently small. Furthermore, it follows from \eqref{cri} that
\begin{equation*}
\nabla\Phi(\bar{x})\xi-B\eta\in\ok^*=N_Y(\olm)+\R(\bar{z}-B\bar{\lm}).
\end{equation*}
Thus there exist $\alpha\in\mathbb{R}$ and $w\in N_Y(\bar{\lm})$ such that $\nabla\Phi(\bar{x})\xi-B\eta=\alpha(\bar{z}-B\bar{\lm})+w$. Using this together with \eqref{cri} gives us the equalities
\begin{equation*}
0=\inp{\nabla\Phi(\bar{x})\xi-B\eta}{\eta}=\alpha\inp{\bar{z}-B\bar{\lm}}{\eta}+\inp{w}{\eta}=\inp{w}{\eta}.
\end{equation*}
Recall that $ N_Y(\bar{\lm})=\{\sum_{i\in I(\bar{\lm})}\beta_i b_i|\;\beta_i\ge 0\}$, where $I(\olm)$ stands for the set of active constraints in $Y$ at $\olm$. It allows us to deduce from the inclusion $w\in N_Y(\bar{\lm})$ that there are numbers $\beta_i\ge 0$ as $i\in I(\bar{\lm})$ such that $w=\sum_{i\in I(\bar{\lm})}\beta_i b_i$, and therefore
\begin{equation*}
\sum_{i\in I(\bar{\lm})}\beta_i\inp{b_i}{\eta}=\inp{w}{\eta}=0.
\end{equation*}
Observe furthermore the relationships
\begin{equation*}
z_t-B\lm_t=\Phi(\bar{x})+t\nabla\Phi(\bar{x})\xi-B\bar{\lm}-tB\eta=\bar{z}-B\bar{\lm}+t(\nabla\Phi(\bar{x})\xi-B\eta)=(1+t\alpha)(\bar{z}-B\bar{\lm})+tw,
\end{equation*}
where $1+t\alpha>0$ for small $t>0$. Since both $\bar{z}-B\bar{\lm}$ and $w$ belong to $N_Y(\bar{\lm})$, it follows that  $(1+t\alpha)(\bar{z}-B\bar{\lm})+tw\in N_Y(\bar{\lm})$, and thus there is $\tau_{it}\ge 0$ for $i\in I(\bar{\lm})$ such that $z_t-B\lm_t=\sum_{i\in I(\bar{\lm})}\tau_{it}b_i$. Noting that $\inp{z_t-B\lambda_t}{\eta} = 0$ and $\inp{b_i}{\eta} \leq 0$ for all $i \in I(\olm)$, we deduce that
\begin{equation}\label{act}
\inp{b_i}{\eta} = 0 \textrm{ for all } i \in I(\olm) \textrm{ with } \tau_{it}>0.
\end{equation}
Let us now show that
\begin{equation*}
\tau_{it}=0\;\mbox{ if }\;i\in I(\bar{\lm})\setminus I(\lm_t).
\end{equation*}
Suppose on the contrary that there is an index $i_0\in I(\bar{\lm})\setminus I(\lm_t)$ for which $\tau_{i_0t}>0$. This means that $\inp{b_{i_0}}{\bar{\lm}}=\alpha_{i_0}$ and $\inp{b_{i_0}}{\lm_t}<\alpha_{i_0}$. Therefore
\begin{equation*}
\inp{b_{i_0}}{\bar{\lm}}+t\inp{b_{i_0}}{\eta}=\inp{b_{i_0}}{\lm_t}<\alpha_{i_0},
\end{equation*}
which in turn yields $\inp{b_{i_0}}{\eta}<0$, a contradiction with \eqref{act}. Thus for all $i\in I(\bar{\lm})\setminus I(\lm_t)$ we get $\tau_{it}=0$ and hence arrive at
\begin{equation*}
z_t-B\lm_t=\sum_{i\in I(\lm_t)}\tau_{it}b_i\in N_Y(\lm_t).
\end{equation*}
This verifies \eqref{lm-th} and thus implies that $(x_t,\lm_t)\in S(p_{1t},p_{2t})$. It now follows from the assumed semi-isolated calmness \eqref{upper} in (ii) that
\begin{equation*}
\n{\xi}=\frac{\n{x_t-\bar{x}}}{t}\le\frac{\ell\big(\n{p_{1t}}+\n{p_{2t}}\big)}{t},
\end{equation*}
which results in $\xi=0$ by letting $t\dn 0$. It tells us $\bar{\lm}$ is noncritical and hence justify the implication (ii)$\implies$(i) of the theorem.\vspace*{0.05in}

Next we prove the opposite implication (i)$\Longrightarrow$(ii). Assuming that the multiplier noncriticality in (i) holds, let us first verify the following statement.\\[1ex]
{\bf Claim:} {\em There exist numbers $\ve>0$ and $\ell\ge 0$ and neighborhoods $U$ of $0\in\R^n$ and $W$ of $0\in \R^m$ such that
for any $(p_1,p_2)\in U\times W$ and $(x_{p_1p_2},\lm_{p_1p_2})\in S(p_1,p_2)\cap\mathbb{B}_\ve(\bar{x},\bar{\lm})$ we have
\begin{equation} \label{up1}
\n{x_{p_1p_2}-\bar{x}}\le\ell\big(\n{p_1}+\n{p_2}\big).
\end{equation}}\vspace*{-0.15in}

To justify this claim, suppose on the contrary that \eqref{up1} fails and thus for any $k\in\mathbb{N}$ find $(p_{1k},p_{2k})\in\mathbb{B}_{1/k}(0)\times\mathbb{B}_{1/k}(0)$, $k\in\mathbb{N}$, and $(x_k,\lm_k)\in S(p_{1k},p_{2k})\cap \mathbb{B}_{1/k}(\bar{x},\bar{\lm})$ such that
\begin{equation*}
\frac{\n{p_{1k}}+\n{p_{2k}}}{\n{x_k-\bar{x}}}\rightarrow 0\;\mbox{ as }\;k\to\infty.
\end{equation*}
Denote $t_k:=\|x_k-\ox\|$ and deduce from the convergence above that $p_{1k}=o(t_k)$ and $p_{2k}=o(t_k)$. Since $\thy$ is a convex piecewise linear-quadratic function, it follows from the proof of \cite[Theorem~11.14(b)]{rw} that $\gph\partial\thy$ is a union of finitely many convex  polyhedral sets. This together with \cite[Theorem~3D.1]{dr} and $\oz:=\Phi(\ox)\in\dom\partial\thy$ ensures the existence of a number $\ell'\ge 0$ and a neighborhood $O$ of $\oz$ such that for all $z\in O\cap\dom\partial\thy$ we have
\begin{equation}\label{ulp}
\partial\thy(z)\subset\partial\thy(\zb)+\ell'\n{z-\zb}\mathbb{B}.
\end{equation}
Suppose without loss of generality that $z_{k}:=p_{2k}+\Phi(x_{k})\in O$ for all $k\in\mathbb{N}$. Since $\lm_k\in\partial\thy(z_k)$, there exist
 $\lm\in\partial\thy(\oz)$ and $b\in\B$ such that $\lm_k=\lm+\ell'\n{z_k-\zb}b$. Using this along with the classical Hoffman lemma, we find a number $M\ge 0$ such that
\begin{eqnarray}\label{err}
\begin{array}{ll}
\mathrm{dist}\big(\lm_{k};\Lambda(\bar{x})\big)&\le M\left(\n{\Psi(\bar{x},\lm_{k})}+\dist\big(\lm_k;\partial\thy(\oz)\big)\right)\\
&\le M\n{\Psi(\bar{x},\lm_{k})-\Psi(x_{k},\lm_{k})}+M \n{\Psi(x_{k},\lm_{k})}+\ell'\n{z_{k}-\zb}\\
&\le M\rho(1+\|\lm_k\|)\n{x_{k}-\bar{x}}+M\n{p_{1k}}+\ell'\rho\n{x_{k}-\ox}+\ell'\|p_{2k}\|,
\end{array}
\end{eqnarray}
where $\rho$ is a common calmness constant for the mappings $f$, $\Phi$, and $\nabla\Phi$ at $\ox$. Since $\Lambda(\bar{x})$ is closed and convex, for each $k\in\N$ there exists a vector $\mu_k\in\Lambda(\bar{x})$ for which
\begin{equation*}
\frac{\|\lm_k-\mu_k\|}{t_k}\le M\rho(1+\|\lm_k\|)+M\frac{\n{p_{1k}}}{t_k}+\ell'\rho+\ell'\frac{\|p_{2k}\|}{t_k},\quad k\in\N.
\end{equation*}
Thus we can assume without loss of generality that
\begin{equation*}
\frac{\lm_k-\mu_k}{t_k}\rightarrow\tilde{\eta}\;\mbox{ for some }\;\tilde\eta\in\R^m.
\end{equation*}
By passing to a subsequence if necessary, it follows that
\begin{equation*}\label{xi}
\frac{x_k-\ox}{t_k}\to\xi\;\mbox{ as }\;k\to\infty\;\mbox{ with some }\;0\ne\xi\in\R^n.
\end{equation*}
Due to $\mu_k\in\Lambda(\bar{x})$ and the discussions above we get the equalities
\begin{equation*}
\begin{array}{ll}
o(t_k)=p_{1k}&=\Psi(x_k,\mu_k)=\Psi(x_k,\mu_k)-\Psi(\bar{x},\mu_k)+\nabla\Phi(x_k)^*(\lm_k-\mu_k)\\
&=\nabla_x\Psi(\bar{x},\mu_k)(x_k-\bar{x})+\nabla\Phi(x_k)^*(\lm_k-\mu_k)+o(t_k),
\end{array}
\end{equation*}
which lead us as $k\to\infty$ to the limiting condition
\begin{equation} \label{pl03}
\nabla_x\Psi(\bar{x},\bar{\lm})\xi+\nabla\Phi(\bar{x})^*\tilde{\eta}=0,
\end{equation}
It further follows from $(x_k,\lm_k)\in S(p_{1k},p_{2k})$ that $\lm_k\in\partial\thy(z_k)$, which is equivalent to the inclusion $z_k-B\lm_k\in N_Y(\lm_k)$ for each $k\in\N$ by Theorem~\ref{gdt}(ii). Since $Y$ is a convex polyhedral set, the Reduction Lemma from \cite[Lemma~2E.4]{dr}) tells us that
\begin{equation*}
z_k-B\lm_k-(\zb-B\olm)\in N_\ok(\lm_k-\olm)
\end{equation*}
for all $k\in\N$ sufficiently large, where $\ok$ is the critical cone to $Y$ at $\oz$ for $\oz-B\olm$ taken from Theorem~\ref{gdt}(iii).
This along with Theorem~\ref{gdt}(iii) brings us to the conclusions
\begin{equation*}
\lm_k-\olm\in\partial\thk(z_k-\zb)=D\partial\thy(\bar{z},\bar{\lm})(z_k-\oz),\;\mbox{ and so}
\end{equation*}
\begin{equation}\label{pl01}
\frac{\lm_k-\olm}{t_k}\in D\partial\thy(\bar{z},\bar{\lm})\Big(\frac{z_k-\oz}{t_k}\Big)=\partial\thk\Big(\frac{z_k-\zb}{t_k}\Big),
\end{equation}
which imply in turn that $\dps\frac{z_k-\zb}{t_k}\in\dom\partial\thk$. Since $\ok$ is a convex polyhedral set, it follows from Theorem~\ref{gdt}(i) that $\thk$ is a convex piecewise linear-quadratic function. Thus \cite[Proposition~10.21]{rw} tells us that $\dom\partial\thk=\dom\thk$. Employing Theorem~\ref{gdt}(i) ensures that $\dom\thk$ is a closed set. Combining it with the convergence $\dps \frac{z_k-\zb}{t_k}\rightarrow\nabla\Phi(\xb)\xi$ as $k\rightarrow\infty$ yields
\begin{equation}\label{pl02}
\nabla\Phi(\xb)\xi\in\dom\partial\thk.
\end{equation}
Since $\mu_k\in\Lambda(\bar{x})$, we get $\mu_k\in\partial\thy(\oz)$ and, proceeding similarly to the proof of \eqref{pl01}, arrive at
\begin{equation*}
\frac{\mu_k-\olm}{t_k}\in\partial\thk(0).
\end{equation*}
Furthermore, it follows from $\olm\in\Lambda(\xb)$ and $\mu_k\in\Lambda(\xb)$ that $\olm-\mu_k\in\ker\nabla\Phi(\xb)^*$. Using  \eqref{pl02} and arguing as in the proof of
 \eqref{ulp}, we find $\ell'\ge 0$ and a neighborhood $O$ of $\nabla\Phi(\xb)\xi$ such that
\begin{equation*}
\partial\thk(u)\subset\partial\thk\big(\nabla\Phi(\xb)\xi\big)+\ell'\n{u-\nabla\Phi(\xb)\xi}\mathbb{B}
\end{equation*}
for all $u\in O\cap\dom\partial\thk$. Employing the latter together with \eqref{pl01} leads us to the relationships
\begin{eqnarray*}
\frac{\lm_k-\mu_k}{t_k}&=&\frac{\lm_k-\olm}{t_k}+\frac{\olm-\mu_k}{t_k}\\
&\in&\partial\thk\Big(\frac{z_k-\zb}{t_k}\Big)-\big[\mathrm{ker}\nabla\Phi(\xb)^*\cap\partial\thk(0)\big]\\
&\subset&\partial\thk(\nabla\Phi(\xb)\xi)+\ell'\big\|\frac{z_k-\zb}{t_k}-\nabla\Phi(\xb)\xi\big\|\B-\big[\ker\nabla\Phi(\xb)^*\cap\partial\thk(0)\big].
\end{eqnarray*}
This allows us to find, for all $k\in\N$ sufficiently large, a $b_k\in\B$ such  that
\begin{equation}\label{inc1}
\frac{\lm_k-\mu_k}{t_k}-\ell'\big\|\frac{z_k-\zb}{t_k}-\nabla\Phi(\xb)\xi\big\|b_k\in\partial\thk(\nabla\Phi(\xb)\xi)-\big[\ker\nabla\Phi(\xb)^*\cap \partial\thk(0)\big].
\end{equation}
We can see that the left-hand side of inclusion \eqref{inc1} converges as $k\to\infty$ to the vector $\tilde{\eta}$. On the other hand, the right-hand side of this inclusion is the sum of two convex polyhedral sets, and so  is closed. This shows that $\tilde\eta$ satisfies to
\begin{equation}\label{u67}
\tilde{\eta}\in\partial\thk(\nabla\Phi(\xb)\xi)-\big[\ker\nabla\Phi(\xb)^*\cap\partial\thk(0)\big].
\end{equation}
Thus  we get vectors $\eta\in\partial\thk(\nabla\Phi(\xb)\xi)$ and $\eta'\in \ker\nabla\Phi(\xb)^*\cap\partial\thk(0)$, which provide the representation $\tilde{\eta}=\eta-\eta'$. It follows from the relationship \eqref{gdr} in Theorem~\ref{gdt}(iii) that $\eta\in D\partial\thy(\zb,\olm)(\nabla\Phi(\xb)\xi)$. Furthermore, employing \eqref{pl03} tells us that
\begin{equation*}
0=\nabla_x\Psi(\bar{x},\bar{\lm})\xi+\nabla\Phi(\bar{x})^*\tilde\eta=\nabla_x\Psi(\bar{x},\bar{v})\xi+\nabla\Phi(\bar{x})^*\eta,
\end{equation*}
which contradicts the noncriticality of $\olm$ due to $\xi\ne 0$ and thus completes the proof of the claim.\vspace*{0.05in}

To finalize verifying implication (i)$\Longrightarrow$(ii) in the theorem, take the neighborhoods $U$ and $W$ from the above claim and shrink them if necessary for the subsequent procedure. Using the claim and arguing similarly to the proof of the conditions in \eqref{err} give us
a constant $\ell'\ge 0$ such that for any $(p_1,p_2)\in U\times W$ and any $(x_{p_1p_2},\lm_{p_1p_2})\in S(p_1,p_2)\cap\mathbb{B}_\ve(\bar{x},\bar{\lm})$ we have
\begin{equation}\label{pap}
\dist\big(\lm_{p_1p_2};\Lambda(\bar{x})\big)\le\ell'\big(\n{x_{p_1p_2}-\bar{x}}+\n{p_1}+\n{p_2}\big).
\end{equation}
Combining it with \eqref{up1} allows us to find $\ell\ge 0$ for which $(p_1,p_2)\in U\times W$ and
\begin{equation*}
\n{x_{p_1p_2}-\bar{x}}+\dist\big(\lm_{p_1p_2};\Lambda(\bar{x})\big)\le\ell\big(\n{p_1}+\n{p_2}\big)
\end{equation*}
whenever $(x_{p_1p_2},\lm_{p_1p_2})\in S(p_1,p_2)\cap\mathbb{B}_\ve(\bar{x},\bar{\lm})$. This clearly justifies the semi-isolated calmness property \eqref{upper} and thus finishes the proof of implication (i)$\implies$(ii).

The equivalence between (ii) and (iii) can be verified similarly to the corresponding arguments in the proof of \cite[Theorem~4.1]{ms17}, and so we omit them here. Thus it remains to establish the equivalence between assertions (ii) and (iv) of the theorem to complete its proof.
	
Let us start with checking implication (iv)$\Longrightarrow$(ii). Picking $(p_1,p_2)\in\B_{\ve}(0,0)$ and $(x,\lm)\in S(p_1,p_2)\cap\B_{\ve}(\ox,\olm)$ with $\ve$ and $\ell$ taken from (iv), we get from the definition of $S$ that
\begin{equation}\label{inc2}
\Psi(x,\lm)=p_1\;\mbox{ and }\;\lm\in\p\thy(\Phi(x)+p_2).
\end{equation}
It follows from \cite[Proposition~12.19]{rw} due to the convexity of $\thy$ that $P\thy=(I+\p\thy)^{-1}$, and hence the second inclusion in \eqref{inc2} is equivalent to the equality $P\thy(\lm+\Phi(x)+p_2)=\Phi(x)+p_2$. Appealing now to \eqref{subr3} brings us to the estimates
\begin{eqnarray*}
\|x-\ox\|+\dist\big(\lm,\Lm(\ox)\big)&\le&\ell\big(\|\Psi(x,\lm)\|+\|\Phi(x)-P\thy(\lm +\Phi(x)\|\big)\\
&\le&\ell\big(\|p_1\|+\|P\thy(\lm+\Phi(x)+p_2)-P\thy(\lm +\Phi(x))\| +\|p_2\|\big)\\
&\le&\ell\big(\|p_1\|+\|p_2\|+\|p_2\|\big),
\end{eqnarray*}
which readily justify the assertion in (ii).

Finally, we verify the converse implication (ii)$\Longrightarrow$(iv). To proceed, pick $(x,\lm)\in\B_{\ve/2}(\ox,\olm)$, where $\ve$ is taken from (ii). Define the vectors
\begin{equation}\label{ee1}
p_2:= P\thy\big(\lm+\Phi(x)\big)-\Phi(x)\;\mbox{ and }\;p_1:=\Psi(x,\lm-p_2).
\end{equation}
Since $\Phi$ and  $\nabla \Phi$ are continuous at $\ox$ and since $P\thy$ is Lipschitz continuous,
we assume without loss of generality that $(p_1,p_2)\in\B_{\ve/2}(0,0)$ and $\B_{\ve/2}(0,0)\subset U\times W$, where $U$ and $W$ come from (ii).
It follows from \eqref{ee1} that $(x,\lm-p_2)\in S(p_1,p_2)\cap\B_{\ve}(\ox,\olm)$. Since $\nabla \Phi$ is continuous at $\ox$, we can assume without loss
generality that for some $\rho>0$ we have $\|\nabla \Phi(x)\|\leq \rho$ for all $x\in\B_\ve(\ox)$.
So we deduce from \eqref{upper} that
\begin{eqnarray*}
\|x-\ox\|+\dist\big(\lm-p_2,\Lm(\ox)\big)&\le&\ell\big(\|p_1\|+\|p_2\|\big)\\
&\le&\ell\big(\|\Psi(x,\lm-p_2)\|+\|\Phi(x)-P\thy(\lm +\Phi(x))\|\big)\\
&\le&\ell\big(\|\Psi(x,\lm)\|+\rho\|p_2\|+\|\Phi(x)-P\thy(\lm +\Phi(x))\|\big)\\
&\le&\ell\big(\|\Psi(x,\lm)\|+(\rho+1)\|\Phi(x)-P\thy(\lm+\Phi(x))\|\big).
\end{eqnarray*}
Recall that the distance function $\dist\big(\cdot;\Lm(\ox)\big)$ is Lipschitz continuous; so we have
\begin{equation}\label{ee2}
\dist\big(\lm;\Lm(\ox)\big)-\dist\big(\lm-p_2;\Lm(\ox)\big)\le\|p_2\|=\|\Phi(x)-P\thy(\lm+\Phi(x))\|,
\end{equation}
which in combination with the obtained inequalities leads us to
\begin{equation*}
\|x-\ox\|+\dist\big(\lm;\Lm(\ox)\big)\le\ell\|\Psi(x,\lm)\|+\big(\ell(\rho+1)+1\big)\|\Phi(x)-P\thy(\lm+\Phi(x))\|.
\end{equation*}
This verifies (iv) and completes the proof of the theorem.
\end{proof}\vspace*{0.05in}

To conclude this section, let us mention some connection of the obtained characterizations of noncritical multipliers for variational systems \eqref{VS} with the uniqueness of Lagrange multipliers therein, which is {\em not} assumed in Theorem~\ref{charc}. Indeed, looking more closely at the proof of theorem reveals that the second term in \eqref{u67} is actually {\em undesired}, since it provides complications for the proof. But, as follows from Theorem~\ref{uniq}, this terms disappears (reduces to $\{0\}$) if the set of Lagrange multipliers $\Lambda(\ox)$ is a singleton. This phenomenon has been recently observed in \cite{ms18} for the case of constrained optimization problems.

\section{Noncriticality in Extended Nonlinear Programming}\label{comp-opt}

Here we concentrate on problems of composite optimization given by \eqref{co}, where $\th=\thy$ is taken from \eqref{theta}. It means that we are dealing with the class of ENLPs discussed in Section~\ref{intro}. Starting with this section we assume that $\ph_0$ and $\Phi$ are not just twice differentiable, but belongs to the class of ${\cal C}^2$-smooth mappings around the points in question.

Define the {\em Lagrangian} of \eqref{co} by
\begin{equation}\label{comlag}
L(x,\lm):=\ph_0(x)+\inp{\Phi(x)}{\lm}-\frac{1}{2}\inp{\lm}{B\lm}\;\mbox{ for }\;(x,\lm)\in\R^n\times\R^m
\end{equation}
and observe that the KKT system for \eqref{co} is written as
\begin{equation}\label{kkt-co}
\nabla_xL(x,\lm)=0,\;\lm\in\p\thy(\Phi(x)).
\end{equation}
Thus \eqref{kkt-co} is a particular case of \eqref{VS} with $\Psi:=\nabla_x L$. Denoting
\begin{equation}\label{lcom}
\Lambda_{\mathrm{com}}(\xb):=\big\{\lm\in\R^m\big|\;\nabla_xL(\xb,\lm)=0,\;\lm\in\p\thy(\Phi(\xb))\big\},
\end{equation}
the corresponding set of Lagrange multipliers, we have Definition~\ref{crit} of multiplier criticality as well as all the above results being specified for the KKT system \eqref{kkt-co}.

On the other hand, there are some phenomena concerning critical and noncritical Lagrange multipliers that distinguish KKT systems in optimization from general variational systems of type \eqref{VS}. We consider them in this and two subsequent sections.\vspace*{0.05in}

The following theorem provides a certain {\em second-order sufficient condition} ensuring simultaneously the {\em strict minimality} of a feasible solution to ENLP \eqref{co} and the {\em noncriticality} of the corresponding Lagrange multiplier. In its formulation we use the critical cone $\ok$ defined in Theorem~\ref{gdt}(iii) as well as the notation $\rge A$ for the range of a linear operator $A$. Note that the existence of Lagrange multipliers corresponding to $\ox$ in \eqref{co}, which is assumed below, is ensured by the first-order qualification condition \eqref{bcq} from Lemma~\ref{esonc}.

\begin{Th}{\bf(second-order sufficient condition for strict local minimizers and noncritical multipliers in ENLPs).}\label{esosc}
Let $(\xb,\olm)$ be a solution to KKT system \eqref{kkt-co}. Assume further that the second-order sufficient condition
\begin{equation}\label{sc}
\big\la\nabla_{xx}^2L(\xb,\olm)w,w\big\ra+2\theta_{\ok,B}\big(\nabla\Phi(\xb)w\big)>0\;\mbox{ if }\;w\in\R^n\setminus\{0\}\;\mbox{with}\;\nabla\Phi(\ox)w\in\ok^*+\rge B
\end{equation}
holds. Then there exist numbers $\ve>0$ and $\ell\ge 0$ such that the quadratic lower estimate
\begin{equation}\label{sogc}
\ph(x)\ge\ph(\ox)+\ell\,\|x-\ox\|^2\;\mbox{ for all }\;x\in\B_\ve(\ox)
\end{equation}
holds for the function $\ph$ taken from \eqref{co}. Furthermore, the Lagrange multiplier $\olm$ satisfying \eqref{sc} is noncritical for the KKT system \eqref{kkt-co}  corresponding to $\ox$.
\end{Th}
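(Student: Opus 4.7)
The plan is to compute $\d^2\ph(\ox,0)$ explicitly, observe that the SOSC \eqref{sc} is precisely the strict positivity of this second subderivative away from $0$, and then derive both assertions from this single fact. The chain rule, combined with formula \eqref{ssd} from Theorem~\ref{gdt}(iii), should yield
\begin{equation*}
\d^2\ph(\ox,0)(w) \;=\; \big\langle\nabla_{xx}^2L(\ox,\olm)w,w\big\rangle + 2\thk\big(\nabla\Phi(\ox)w\big),
\end{equation*}
where the ${\cal C}^2$-smooth terms from $\ph_0$ and $\langle\Phi(\cdot),\olm\rangle$ collapse into $\nabla_{xx}^2L$ through the Lagrangian \eqref{comlag}; the piecewise linear-quadratic structure of $\thy$ makes this chain rule available from \cite{rw} without an additional constraint qualification. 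I would next identify $\dom\thk=\ok^*+\rge B$: since $\ok$ is a polyhedral cone, $\ok^\infty=\ok$, and polar calculus gives $(\ok\cap\ker B)^*=\ok^*+(\ker B)^\perp=\ok^*+\rge B$ thanks to symmetry of $B$; combining this with Theorem~\ref{gdt}(i) applied to $\thk$ yields $\dom\thk=\ok^*+\rge B$. Consequently \eqref{sc} is exactly the statement $\d^2\ph(\ox,0)(w)>0$ for every $w\ne 0$, since whenever $\nabla\Phi(\ox)w$ lies outside $\ok^*+\rge B$ the right-hand side is automatically $+\infty$.

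The quadratic growth \eqref{sogc} would then follow from the standard Rockafellar--Wets result linking strict positivity of the second subderivative with quadratic growth for twice epi-differentiable prox-regular functions; both regularity properties hold here by ${\cal C}^2$-smoothness of $\ph_0,\Phi$ and by $\thy$ being piecewise linear-quadratic, so the applicable theorem from \cite{rw} delivers \eqref{sogc} immediately.

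For noncriticality I would argue by contradiction: suppose $(\xi,\eta)$ solves \eqref{cri} with $\xi\ne 0$, and show that the left-hand side of \eqref{sc} evaluated at $w=\xi$ equals $0$. The third line of \eqref{cri} rewrites as $\nabla\Phi(\ox)\xi=B\eta+(\nabla\Phi(\ox)\xi-B\eta)\in\rge B+\ok^*$, so $w=\xi$ is admissible in \eqref{sc}. The sign and orthogonality conditions in \eqref{cri} translate into $\nabla\Phi(\ox)\xi-B\eta\in N_\ok(\eta)$, which by \eqref{fo} applied to $\thk$ reads $\eta\in\p\thk(\nabla\Phi(\ox)\xi)$. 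Since the Fenchel conjugate of $\thk$ is $\frac{1}{2}\langle\cdot,B\cdot\rangle+\dd_\ok$, the Fenchel-Young equality gives
\begin{equation*}
2\thk\big(\nabla\Phi(\ox)\xi\big) \;=\; 2\langle\eta,\nabla\Phi(\ox)\xi\rangle - \langle\eta,B\eta\rangle.
\end{equation*}
Substituting $\nabla_{xx}^2L(\ox,\olm)\xi=-\nabla\Phi(\ox)^*\eta$ from the first line of \eqref{cri} and invoking the orthogonality $\langle\nabla\Phi(\ox)\xi-B\eta,\eta\rangle=0$ then collapses the sum $\langle\nabla_{xx}^2L(\ox,\olm)\xi,\xi\rangle+2\thk(\nabla\Phi(\ox)\xi)$ to exactly $\langle\eta,\nabla\Phi(\ox)\xi-B\eta\rangle=0$, contradicting SOSC.

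The main obstacle is the chain rule for $\d^2\ph$ and the passage from pointwise positivity of $\d^2\ph(\ox,0)$ to the quadratic growth \eqref{sogc}; both are delicate second-order variational-analytic steps, but they are exactly the calculus available for ${\cal C}^2$-smooth compositions with piecewise linear-quadratic penalties in \cite{rw}. Once those are granted, the rest of the argument reduces to elementary polar calculus and a single Fenchel-Young identity.
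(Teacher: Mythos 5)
Your proposal follows essentially the same route as the paper's own proof: estimate the second subderivative $\d^2\ph(\ox,0)$, identify $\dom\thk=\ok^*+\rge B$ by polar calculus, invoke the Rockafellar--Wets quadratic-growth theorem \cite[Theorem~13.24]{rw}, and then prove noncriticality by contradiction using the first line of \eqref{cri} together with the $N_\ok$-characterization to show the SOSC quantity collapses to zero. Your Fenchel--Young identity is a mild repackaging of the argmax formula in \eqref{fo} that the paper uses, and the algebra $\langle\nabla^2_{xx}L\xi,\xi\rangle+2\thk(\nabla\Phi(\ox)\xi)=\langle\eta,\nabla\Phi(\ox)\xi-B\eta\rangle=0$ is exactly the paper's computation reorganized.

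One step deserves a caveat: you assert that the second-order chain rule delivers the \emph{equality}
$\d^2\ph(\ox,0)(w)=\langle\nabla_{xx}^2L(\ox,\olm)w,w\rangle+2\thk(\nabla\Phi(\ox)w)$
``without an additional constraint qualification,'' and then say the SOSC \eqref{sc} ``is exactly'' the strict positivity of this second subderivative. The exact second-order chain rule for a composition of a $C^2$ map with a piecewise linear-quadratic outer function (\cite[Theorem~13.14]{rw}) in fact requires a qualification condition of the form \eqref{bcq}, which is \emph{not} among the hypotheses of Theorem~\ref{esosc}. The paper sidesteps this by estimating the second-order difference quotients $\Delta_t^2\ph(\ox,0)$ directly and passing to the $\liminf$, which yields only the one-sided bound $\d^2\ph(\ox,0)(w)\ge\langle\nabla_{xx}^2L(\ox,\olm)w,w\rangle+2\thk(\nabla\Phi(\ox)w)$ — but this lower bound is all that is needed to conclude $\d^2\ph(\ox,0)>0$ off the origin and hence \eqref{sogc}. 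Your argument survives because only the implication SOSC $\Rightarrow$ positivity is used, but the ``is exactly'' claim over-reaches and should be replaced by the inequality obtained from difference quotients (or you would need to add the BCQ as a hypothesis). Otherwise the proposal is correct and in line with the paper.
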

\begin{proof} Define the family of second-order difference quotients for $\ph$ at $\ox$ for $\oy\in\R^n$ by
\begin{equation}\label{lk01}
\Delta_t^2\ph(\ox,\oy)(w):=\dfrac{\ph(\ox+tw)-\ph(\ox)-t\langle\oy,\,w\rangle}{\frac{1}{2}t^2}\;\mbox{ with }\;w\in\R^{n},\;t>0.
\end{equation}
Set $\oy:=0\in\R^n$ and deduce from $\olm\in\Lcxb$ that $\oy=\nabla\ph_0(\ox)+\nabla\Phi(\ox)^*\olm$. Then for any $w\in\R^n$ we get the equalities
\begin{eqnarray*}
\Delta_t^2\ph(\ox,0)(w)&=&\Delta_t^2\ph_0(\ox,\nabla\ph_0(\ox))(w)+\dfrac{\thy\big(\Phi(\ox+tw)\big)-\thy\big(\Phi(\ox)\big)-t\langle\nabla\Phi(\ox)^*\olm,w\rangle}{\frac{1}{2}t^2}\\
&=&\Delta_t^2\ph_0(\ox,\nabla\ph_0(\ox))(w)+\dfrac{t\langle\olm,w_t\rangle-t\langle\olm,\nabla\Phi(\ox)w\rangle}{\frac{1}{2}t^2}\\
&&+\dfrac{\thy\big(\Phi(\ox)+tw_t\big)-\thy\big(\Phi(\ox)\big)-t\langle\olm,w_t\rangle}{\frac{1}{2}t^2}\\
&=&\Delta_t^2\ph_0(\ox,\nabla\ph_0(\ox))(w)+\dfrac{t\langle\olm,w_t\rangle-t\langle\olm,\nabla\Phi(\ox)w\rangle}{\frac{1}{2}t^2}+\Delta_t^2 \thy(\Phi(\ox),\olm)(w_t),
\end{eqnarray*}
where $w_t:=\nabla\Phi(\ox)w+\frac{t}{2}\langle\nabla^2\Phi(\ox)w,w\rangle+\frac{o(t^2)}{t}$. It implies together with \eqref{ssd1} and \eqref{ssd} that
\begin{eqnarray}\label{cv01}
\begin{array}{ll}
\d^2\ph(\ox,0)(w)&\ge\langle\nabla^2\ph_0(\ox)w,w\rangle+\langle\nabla^2_{xx}\langle\olm,\Phi(\ox)\rangle w,w\rangle+\d^2\thy(\Phi(\ox),\olm)\big(\nabla\Phi(\ox)w\big)\\\\
&=\langle\nabla^2_{xx}L(\ox,\olm)w,w\rangle+2\thk\big(\nabla\Phi(\ox)w\big).
\end{array}
\end{eqnarray}
Theorem~\ref{gdt}(i) tells us that $\dom\thk=(\ok\cap\ker B)^*=\ok^*+\rge B$. This means that the inclusion $\nabla\Phi(\ox)w\in\ok^*+\rge B$  amounts to $\nabla\Phi(\ox)w\in\dom\thk$. Employing the second-order sufficient condition \eqref{sc} together with \eqref{cv01} ensures that $\d^2\ph(\ox,0)(w)>0$ for all such vectors $w\in\R^n\setminus\{0\}$. Otherwise, we have $\nabla\Phi(\ox)w\notin\dom\thk$, and hence $\thk(\nabla\Phi(\ox)w)=\infty$. This along with \eqref{cv01} results in
\begin{equation*}
\d^2\ph(\ox,0)(w)>0\;\mbox{ for all }\;w\in\R^n\;\mbox{ with }\;\nabla\Phi(\ox)w\notin\dom\thk.
\end{equation*}
Combining all the above brings us to
\begin{equation*}
\d^2\ph(\ox,0)(w)>0\;\mbox{ whenever }\;w\in\R^n\setminus\{0\}.
\end{equation*}
Appealing now to \cite[Theorem~13.24]{rw} guarantees the existence of numbers $\ve>0$ and $\ell\ge 0$ for which the quadratic estimate \eqref{sogc} holds and so ensures that $\ox$ is a strict local minimizer for $\ph$.

Finally, we verify that a multiplier $\olm$ satisfying the second-order condition \eqref{sc} is noncritical for \eqref{kkt-co}. To see it, pick $(\xi,\eta)\in\R^n\times\R^m$ fulfilling \eqref{cri} with $\Psi=\nabla_x L$, i.e., so that
\begin{equation*}
\begin{cases}
\nabla^2_{xx}L(\bar{x},\bar{\lm})\xi+\nabla\Phi(\bar{x})^*\:\eta=0,\;\inp{\nabla\Phi(\bar{x})\xi-B\eta}{\eta}=0,\\
\nabla\Phi(\bar{x})\xi-B\eta\in\ok^*,\;\mbox{ and }\;\eta\in\ok.
\end{cases}
\end{equation*}
It follows from $\nabla\Phi(\bar{x})\xi-B\eta\in\ok^*$ and the discussion above that $\nabla\Phi(\bar{x})\xi\in\dom\thk$ and that
$\eta\in\partial\thk(\nabla\Phi(\ox)\xi)$. Employing the subdifferential expression in \eqref{fo} gives us
\begin{equation*}
\thk\big(\nabla\Phi(\ox)\xi\big)=\langle\eta,\nabla\Phi(\ox)\xi\rangle-\frac{1}{2}\langle B\eta,\eta\rangle=\frac{1}{2}\langle B\eta,\eta\rangle.
\end{equation*}
In this way we arrive at the equalities
\begin{eqnarray*}
0=\langle\nabla^2_{xx}L(\bar{x},\bar{\lm})\xi,\xi\rangle+\langle\eta,\nabla\Phi(\bar{x})\xi\rangle&=&\langle\nabla^2_{xx}L(\bar{x},\bar{\lm})\xi,
\xi\rangle+\langle B\eta,\eta\rangle\\
&=&\langle\nabla^2_{xx}L(\bar{x},\bar{\lm})\xi ,\xi\rangle+2\thk(\nabla\Phi(\ox)\xi),
\end{eqnarray*}
which yield $\xi=0$ due to \eqref{sc} as well as to $\nabla\Phi(\bar{x})\xi\in\dom\thk=\ok^*+\rge B$. This shows that $\olm$ is a noncritical multiplier of \eqref{kkt-co} corresponding to $\ox$ and thus completes the proof.
\end{proof}\vspace*{0.05in}

The next example, which revisits Example~\ref{ex1} in the ENLP framework, illustrates the possibility to use the second-order sufficient condition \eqref{sc} to justify the strict optimality of a feasible solution to \eqref{co} and the noncriticality of the corresponding Lagrange multiplier.

\begin{Example}{\bf(multiplier noncriticality via the second-order sufficient condition).}\label{ex1a} {\rm Consider the ENLP from \eqref{co}, where $m=n$, $\ph_0(x):=x_1^2+\ldots+x_n^2$ and $\Phi(x):=x$, and where $Y$ and $B$ are taken from Example~\ref{ex1}. Then we have
\begin{equation}\label{theta-phi}
\begin{array}{c c c}
\thy\big(\Phi(x)\big)&=&\disp\sup\limits_{y\in\Rn_+}\Big\{\inp{y}{\Phi(x)}-\frac{1}{2}\inp{y}{y}\Big\}\\
&=&\disp\sup\limits_{(y_1,\ldots,y_n)\in\Rn_+}\Big\{\sum\limits_{i=1}^{n}\big(x_iy_i-\frac{1}{2}y_i^2\big)\Big\}\\
&=&\disp\frac{1}{2}\sum\limits_{i=1}^n\big(\max\{x_i,0\}\big)^2.
\end{array}
\end{equation}

Let us check that condition \eqref{sc} holds when $\ox=0$ and $\olm=0$, which confirms by Theorem~\ref{esosc} that $\ox$ is a strict minimizer for this ENLP and $\olm$ is the corresponding noncritical multiplier. Indeed, it follows from Example~\ref{ex1} that $\olm\in\partial\theta(\zb)$, where $\zb:=\Phi(\xb)=0$. By the structure of $L(x,\lm)$ we have the expressions
\begin{equation*}
\nabla_xL(x,\lm)=(2x_1+\lm_1,\ldots,2x_n+\lm_n)\;\mbox{ and }\;\nabla^2_{xx}L(x,\lm)=2I.
\end{equation*}
Then $\nabla_xL(\xb,\olm)=0$ and hence $\olm\in\Lcxb$. Since $\rge B=\Rn$, it follows that $\{w|\;\nabla\Phi(\xb)w\in\mcK^*+\rge B\}=\Rn$, and therefore the sufficient condition in Theorem~\ref{esosc} reads as
\begin{equation*}
\inp{\nabla_{xx}^2L(\xb,\olm)\xi}{\xi}+2\theta_{\ok,B}\big(\nabla\Phi(\xb)\xi\big)>0\;\textrm{ for all }\;\xi\ne 0,
\end{equation*}
which is equivalently presented by
\begin{equation}
2\inp{\xi}{\xi}+2\theta_{\ok,B}\big(\nabla\Phi(\xb)\xi\big)> 0\;\textrm{ for all }\;\xi\ne 0.
\end{equation}
Furthermore, Example~\ref{ex1} tells us that $\mcK=\Rn_+\cap\{\zb\}^\perp$ and so $\mcK=\Rn_+=Y$. Combining this with \eqref{theta-phi}, the sufficient condition \eqref{sc} now becomes
\begin{equation}\label{sc1}
2\inp{\xi}{\xi}+2\thy\big(\nabla\Phi(\xb)\xi\big)>0\;\textrm{ for all }\;\xi\ne 0.
\end{equation}
Since $\thy$ from \eqref{theta-phi} is always nonnegative, condition \eqref{sc1} holds, and thus it confirms the strict minimality of $\xb$ and the noncriticality of $\olm$.}
\end{Example}

\section{Critical Multipliers and Full Stability of Minimizers in ENLPs}\label{full-stab}

This section also deals with constrained minimization problems of the ENLP type and delivers as important message for both theoretical and numerical aspects of optimization. As discussed in Section~\ref{intro}, critical multipliers are particularly responsible for slow convergence of major primal-dual algorithms of optimization and are desired to be excluded for a given local minimizer. It is natural to suppose that seeking not arbitrary while just ``nice" and stable in some sense local minimizers allows us to rule out the appearance of critical multipliers associated with such local optimal solutions. It is conjectured in \cite{m15} that fully stable local minimizers in the sense of \cite{lpr} are appropriate candidate for excluding critical multipliers. This conjecture is affirmatively verified in \cite{brs13} for problems \eqref{co} with $\theta=\thy$ where $B=0$. Now we are able to extend this result to the general case of \eqref{theta} with an arbitrary symmetric positive-semidefinite matrix $B$.

To proceed, we first specify the definition of fully stable local minimizers from \cite{lpr} for problems \eqref{co} with term \eqref{theta}. Consider their {\em canonically perturbed} version described by
\begin{equation}\label{pertco}
\textrm{minimize }\;\ph_0(x)+\theta\big(\Phi(x)+p_2\big)-\inp{p_1}{x}\;\textrm{ subject }\;x\in\R^n
\end{equation}
with parameter pairs $(p_1,p_2)\in\R^n\times\R^m$. Fix $\gamma>0$ and $(\xb,\pb_1,\pb_2)$ with $\Phi(\xb)+\pb_2\in\dom\theta$ and then define the parameter-depended optimal value function for \eqref{pertco} by
\begin{equation*}
m_\gamma (p_1,p_2):=\inf_{\n{x-\xb}\le\gamma}\big\{\ph_0(x)+\theta\big(\Phi(x)+p_2\big)-\inp{p_1}{x}\big\}
\end{equation*}
together with the parameterized set of optimal solutions to \eqref{pertco} given by
\begin{equation}\label{arg}
M_\gamma (p_1,p_2):=\argmin_{\n{x-\xb}\le\gamma}\big\{\ph_0(x)+\theta\big(\Phi(x)+p_2)-\inp{p_1}{x}\big\}
\end{equation}
with the convention that $\argmin:=\emp$ when the expression under minimization in \eqref{arg} is $\infty$. We say that $\xb$ is a {\em fully stable} local optimal solution to problem \eqref{co} if there exist a number $\gamma>0$ and neighborhoods $U$ of $\pb_1$ and $W$ of $\pb_2$ such that the mapping $(p_1,p_2)\mapsto M_\gamma(p_1,p_2)$ is single-valued and Lipschitz continuous with $M_\gamma(\pb_1,\pb_2)=\{\xb\}$ and that the function $(p_1,p_2)\mapsto m_\gamma(p_1,p_2)$ is likewise Lipschitz continuous on $U\times W$.

Note that \cite[Proposition~3.5]{lpr} deduces the local Lipschitz continuity of $m_\gamma$ from the {\em basic constraint qualification} \eqref{bcq} formulated in the following lemma, which is obtained in \cite[Exercise~13.26]{rw}. The second-order necessary condition presented below can be viewed as a ``no-gap" version of the second-order sufficient one used in Theorem~\ref{esosc} with the notation therein.

\begin{Lemma}{\bf(second-order necessary optimality condition for composite optimization problems).}\label{esonc}
Let $\xb$ be a local optimal solution to problem \eqref{co} with $\theta=\thy$ taken from \eqref{theta}, and let the basic constraint qualification
\begin{equation}\label{bcq}
N_{\ss{\dom\thy}}(\Phi(\ox))\cap\ker\nabla\Phi(\xb)^*=\{0\}
\end{equation}
be satisfied, and so $\Lambda_{\mathrm{com}}(\xb)\ne\emp$. Then we have second-order necessary optimality condition
\begin{equation}\label{nc1}
\max_{\lm\in\Lambda_{\mathrm{com}}(\xb)}\big\la\nabla_{xx}^2L(\xb,\lm)w,w\big\ra+2\theta_{\ok,B}\big(\nabla\Phi(\xb)w\big)\ge 0
\end{equation}
valid for all $w\in\R^n$ with $\nabla\Phi(\ox)w\in\ok^*+\rge B$.
\end{Lemma}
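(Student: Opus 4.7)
My plan is to derive \eqref{nc1} by combining three ingredients: Fermat's rule with the first-order subdifferential chain rule (to secure $\Lambda_{\mathrm{com}}(\xb)\ne\emp$), the standard second-order necessary condition $\delta^2\ph(\xb,0)(w)\ge 0$ at a local minimizer, and a second-order chain rule for $\thy\circ\Phi$ in its \emph{max-over-multipliers} form. First, from the local minimality of $\xb$ for $\ph=\ph_0+\thy\circ\Phi$ we have $0\in\partial\ph(\xb)$. Using \eqref{bcq} I would apply the first-order chain rule from \cite[Theorem~10.6]{rw} to obtain $\partial(\thy\circ\Phi)(\xb)=\nabla\Phi(\xb)^*\partial\thy(\Phi(\xb))$, which together with $\nabla\ph_0$ being a singleton produces some $\olm\in\partial\thy(\Phi(\xb))$ with $\nabla\ph_0(\xb)+\nabla\Phi(\xb)^*\olm=0$, i.e.\ $\olm\in\Lambda_{\mathrm{com}}(\xb)$.

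Second, I would invoke the second-order necessary condition for unconstrained local minimizers \cite[Theorem~13.24]{rw}, which yields $\delta^2\ph(\xb,0)(w)\ge 0$ for every $w\in\R^n$. The heart of the argument is then to establish the second-order chain rule
\begin{equation*}
\delta^2\ph(\xb,0)(w)=\max_{\lm\in\Lambda_{\mathrm{com}}(\xb)}\Big\{\big\la\nabla^2_{xx}L(\xb,\lm)w,w\big\ra+\delta^2\thy\big(\Phi(\xb),\lm\big)\big(\nabla\Phi(\xb)w\big)\Big\},
\end{equation*}
after which formula \eqref{ssd} of Theorem~\ref{gdt}(iii) rewrites the inner second subderivative as $2\theta_{\ok,B}(\nabla\Phi(\xb)w)$, and Theorem~\ref{gdt}(i) tells us $\dom\thk=\ok^*+\rge B$. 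For $w$ in the stated domain all terms are finite, and the two displays combine into \eqref{nc1}.

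The main obstacle is the chain-rule identity. Its ``$\ge$'' direction, i.e.\ $\delta^2\ph(\xb,0)(w)\ge\langle\nabla^2_{xx}L(\xb,\lm)w,w\rangle+2\thk(\nabla\Phi(\xb)w)$ for each fixed $\lm\in\Lambda_{\mathrm{com}}(\xb)$, is precisely the Taylor-expansion calculation already carried out in \eqref{cv01} during the proof of Theorem~\ref{esosc} and immediately extends to the max. The delicate direction is ``$\le$'': given a sequence $t_k\dn 0$ and $w_k\to w$ realizing the liminf in $\delta^2\ph(\xb,0)(w)$, one must exhibit a single $\olm\in\Lambda_{\mathrm{com}}(\xb)$ for which the one-sided Taylor expansion is sharp in the limit. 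Here the piecewise linear-quadratic structure of $\thy$ is decisive: since $Y$ is polyhedral and $\gph\p\thy$ is a finite union of polyhedra \cite[Proposition~12.30]{rw}, choosing (via \eqref{fo}) maximizers $\lm_k\in\argmax_{y\in Y}\{\la y,\Phi(\xb+t_kw_k)\ra-\frac12\la y,By\ra\}$ yields a sequence lying in finitely many polyhedral pieces; BCQ \eqref{bcq} together with a Hoffman-type argument as in \eqref{err} bounds $\{\lm_k\}$, and extracting a polyhedral-stable subsequence produces a limit $\olm\in\Lambda_{\mathrm{com}}(\xb)$ with the required sharpness. This is the content encapsulated in \cite[Exercise~13.26]{rw}, completing the proof.
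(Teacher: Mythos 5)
The paper provides no proof of this lemma: it simply cites \cite[Exercise~13.26]{rw}, and your plan --- Fermat's rule under BCQ to obtain $\Lambda_{\mathrm{com}}(\xb)\ne\emp$, the unconstrained second-order necessary condition $\delta^2\ph(\xb,0)(\cdot)\ge 0$ from \cite[Theorem~13.24]{rw}, and then the second-order chain rule for fully amenable composites rewritten via Theorem~\ref{gdt} --- is precisely the content packaged in that exercise, so the two approaches essentially coincide.

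One caveat worth noting about your sketch of the delicate ``$\le$'' direction of the chain-rule identity. Choosing $\lm_k\in\argmax_{y\in Y}\{\la y,\Phi(\xb+t_kw_k)\ra-\tfrac12\la y,By\ra\}$ and passing to a limit $\olm$ only places $\olm$ in $\partial\thy(\Phi(\xb))$ (by outer semicontinuity of the subdifferential); it does not by itself produce the stationarity relation $\nabla\ph_0(\xb)+\nabla\Phi(\xb)^*\olm=0$, so such an $\olm$ need not belong to $\Lambda_{\mathrm{com}}(\xb)$, and the max in \eqref{nc1} could be strictly smaller than what these ad hoc maximizers achieve. The actual argument behind \cite[Exercise~13.26]{rw} and its supporting theorems proceeds through the duality structure of piecewise linear-quadratic amenable functions (essentially a minimax over $(w',\lm)$ where BCQ guarantees boundedness of the dual sequence and allows the exchange), not through a direct pointwise selection of maximizers of \eqref{theta}. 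Since you ultimately defer to the exercise for this step, your plan is sound; just be aware that this particular selection mechanism does not close the gap on its own.
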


Now we are ready to establish the aforementioned result in the general ENLP setting.

\begin{Th}{\bf(excluding critical multipliers by full stability of local minimizers).}\label{criful} Let $\xb$ be a fully stable local optimal solution to problem \eqref{co}, and let $\theta$ be taken from \eqref{theta}. Then the Lagrange multiplier set $\Lcxb$ in \eqref{lcom} is nonempty and does not include critical multipliers.
\end{Th}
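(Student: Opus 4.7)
The plan is to prove the two claims separately. Nonemptiness of $\Lcxb$ follows from a constraint qualification argument; the exclusion of critical multipliers is obtained by contradiction, by combining the stationary-point construction hidden in the proof of Theorem~\ref{charc} with the Lipschitz behaviour of the fully stable minimizer map $M_\gamma$.

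For nonemptiness, full stability ensures that the marginal function $m_\gamma$ is locally Lipschitz on a neighborhood of $(0,0)$, which in turn forces the basic constraint qualification \eqref{bcq} at $\ox$ (this is precisely the subdifferential calculation behind \cite[Proposition~3.5]{lpr}). Lemma~\ref{esonc} then yields $\Lcxb \ne \emp$.

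For the main claim, suppose by contradiction that some $\olm \in \Lcxb$ is critical. Theorem~\ref{desc} specialized to $\Psi = \nabla_x L$ provides $(\xi,\eta)\in\R^n\times\R^m$ with $\xi \ne 0$ satisfying \eqref{cri}. Repeating verbatim the construction from the proof of implication (ii)$\Rightarrow$(i) of Theorem~\ref{charc}, set $x_t := \ox + t\xi$, $\lm_t := \olm + t\eta$, $z_t := \Phi(\ox)+t\nabla\Phi(\ox)\xi$, and
\begin{equation*}
p_{1t} := \Psi(x_t,\lm_t),\qquad p_{2t} := z_t - \Phi(x_t).
\end{equation*}
The Taylor expansion and the polyhedral reduction for $\lm_t \in Y$ and $z_t - B\lm_t \in N_Y(\lm_t)$ performed there yield $p_{1t},p_{2t}=o(t)$ and $(x_t,\lm_t) \in S(p_{1t},p_{2t})$, so that $x_t$ is a stationary point of the canonically perturbed problem \eqref{pertco} with parameter $(p_{1t},p_{2t})$ and associated multiplier $\lm_t$.

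By full stability, for $t$ small the set $M_\gamma(p_{1t},p_{2t})$ reduces to a single minimizer $x_t^*$ obeying $\|x_t^* - \ox\| \le L(\|p_{1t}\|+\|p_{2t}\|) = o(t)$. If one can identify $x_t = x_t^*$, the chain
\begin{equation*}
t\|\xi\| = \|x_t - \ox\| = \|x_t^* - \ox\| \le o(t)
\end{equation*}
is impossible for $t > 0$ small enough, contradicting $\xi \ne 0$ and ruling out criticality of $\olm$. The identification $x_t = x_t^*$ is the main obstacle: it requires converting the single-valued Lipschitz behaviour of $M_\gamma$ (the defining property of full stability) into a localization of the whole stationary-point map $S$. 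The natural tool is a uniform second-order quadratic growth for the perturbed objective at $x_t^*$, an analogue of \eqref{sogc} parameterized by $(p_1,p_2)$. Such a growth, which should itself follow from the second-order characterization of full stability for ENLPs in the spirit of \eqref{sc}, prevents any other stationary point—in particular $x_t$—from lying in $\B_\gamma(\ox)$ alongside $x_t^*$, forcing the desired identification and closing the argument.
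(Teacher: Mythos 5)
Your overall strategy is sound and you have correctly identified the key difficulty: full stability gives Lipschitz control of the \emph{minimizer} map $M_\gamma$, whereas the perturbed pair $(x_t,\lm_t)$ constructed from a critical direction is only a \emph{stationary} point, and one must somehow upgrade the control on minimizers to control on all nearby stationary points. However, the closing step of your argument is a genuine gap, not just a loose end. Your proposed fix---a uniform quadratic growth estimate at $x_t^*$ in the spirit of \eqref{sogc}---would show that $x_t^*$ is the unique \emph{minimizer} near $\ox$, but it does not by itself exclude nearby stationary points that are not minimizers, so the identification $x_t = x_t^*$ does not follow. The tool that actually accomplishes this upgrade is a known characterization of full stability: by \cite[Proposition~6.1]{brs13}, full stability of $\ox$ implies that the \emph{stationary-point} map
\begin{equation*}
(p_1,p_2)\mapsto Q(p_1,p_2)=\big\{x\big|\;p_1\in\nabla\ph_0(x)+\nabla\Phi(x)^*\p\thy(\Phi(x)+p_2)\big\}
\end{equation*}
admits a Lipschitzian single-valued graphical localization near $\big((0,0),\ox\big)$. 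This is exactly what you describe as ``converting the single-valued Lipschitz behaviour of $M_\gamma$ into a localization of the whole stationary-point map,'' and it is a nontrivial result that should be cited, not re-derived from a growth condition.

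The paper's proof also streamlines the logic in a way worth noting. Rather than assuming a critical multiplier exists and constructing $(x_t,\lm_t)$ to force a contradiction (which would essentially re-prove implication (ii)$\Rightarrow$(i) of Theorem~\ref{charc}), it directly establishes the semi-isolated calmness property \eqref{upper} for $S_{KKT}$: the Lipschitz localization of $Q$ gives $\n{x_{p_1p_2}-\ox}\le\ell(\n{p_1}+\n{p_2})$ for any $(x_{p_1p_2},\lm_{p_1p_2})\in S_{KKT}(p_1,p_2)$ near $(\ox,\olm)$, and the Hoffman-type error bound \eqref{pap} then controls $\dist(\lm_{p_1p_2};\Lcxb)$. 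Theorem~\ref{charc} converts this into noncriticality of $\olm$ for free. Your nonemptiness argument via the basic constraint qualification is essentially the same as the paper's, though the paper carries it out directly by perturbing with $p_2:=t\eta$ for $\eta\in N_{\dom\thy}(\Phi(\ox))\cap\ker\nabla\Phi(\xb)^*$ and letting $t\dn 0$ rather than appealing to the Lipschitz continuity of $m_\gamma$.
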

\begin{proof}
First we show that the full stability of $\xb$ ensures the validity of the qualification condition \eqref{bcq}. Indeed, pick any $\eta\in N_{\ss{\dom\thy}}(\Phi(\ox))\cap\ker\nabla\Phi(\xb)^*$. Select $p_1=\pb_1:=0$ and $p_2:=t\eta$ as $t\downarrow 0$. It follows from the full stability of $\xb$ that there exist a Lipschitz constant $\ell\ge 0$ and the unique solution $x_{p_1 p_2}$ to problem \eqref{pertco} such that
\begin{equation}\label{xleta}
\n{x_{p_1 p_2}-\xb}\le\ell t\n{\eta}.
\end{equation}
Since $\Phi(x_{p_1 p_2})+p_2\in\dom\thy$ and $\eta\in N_{\ss{\dom\thy}}(\Phi(\ox))$, we get $\inp{\eta}{\Phi(x_{p_1 p_2})+p_2-\Phi(\xb)}\le 0$.
This  gives us the relationships
\begin{equation*}
\begin{array}{ll}
0&\ge\big\la\eta,\nabla\Phi(\xb)(x_{p_1 p_2}-\xb)+o(\n{x_{p_1 p_2}-\xb})+p_2\big\ra\\
&=\big\la\nabla\Phi(\xb)^*\eta,x_{p_1 p_2}-\xb\big\ra+\big\la\eta,o(\n{x_{p_1 p_2}-\xb})+p_2\big\ra\\
&=\big\la\eta,o(\n{x_{p_1 p_2}-\xb})\big\ra+t\|\eta^2\|.
\end{array}
\end{equation*}
Using estimate \eqref{xleta} and letting $t\dn 0$ lead to $\eta=0$. Thus the basic constraint qualification \eqref{bcq} is satisfied, which ensures that $\Lambda_{\mathrm{com}}(\xb)\ne\emp$.

Next we pick any $\olm\in\Lcxb$ and show that it is noncritical for the unperturbed KKT system \eqref{kkt-co} corresponding to $\ox$. Consider the KKT system for the perturbed problem \eqref{pertco} that can be written as
\begin{equation}\label{kkt}
\begin{pmatrix}
p_1\\p_2
\end{pmatrix}\in\begin{pmatrix}
\nabla_x L(x,\lm)\\
-\Phi(x)\end{pmatrix}+\begin{pmatrix}
0\\
(\partial\thy)^{-1}(\lm)
\end{pmatrix}.
\end{equation}
Let $S_{KKT}\colon\R^n\times\R^m\rightrightarrows\R^n\times\R^m$ be the solution map to \eqref{kkt} given by
\begin{equation}\label{skkt}
S_{KKT}(p_1,p_2):=\big\{(x,\lm)\in\R^n\times\R^m\big|\;p_1=\nabla_x L(x,\lm),\;\lm\in\p\thy\big(p_2+\Phi(x)\big)\big\}.
\end{equation}
Employing Theorem~\ref{charc}, we only need to prove that there exist numbers $\epsilon>0$ and $\ell\ge 0$ as well as neighborhoods $U$ of $0\in\mathbb{R}^n$ and $W$ of $0\in\mathbb{R}^m$ such that for any $(p_1,p_2)\in U\times V$ and any $(x_{p_1 p_2},\lm_{p_1 p_2})\in S_{KKT}(p_1,p_2)\cap(\mathbb{B}_\epsilon(\xb)\times\mathbb{B}_\epsilon(\olm))$, estimate \eqref{upper} holds with replacing $\Lambda(\xb)$ by the set of Lagrange multipliers $\Lcxb$ taken from \eqref{lcom}.

To this end we deduce from the full stability of $\ox$ in \eqref{pertco} with $(\pb_1,\pb_2)=(0,0)$ due to the result of \cite[Proposition~6.1]{brs13} that there exist neighborhoods $\tilde{U}\times\tilde{W}$ of $(0,0)$ and $\tilde{V}$ of $\xb$ for which the set-valued mapping
\begin{equation*}
(p_1,p_2)\mapsto Q(p_1,p_2):=\big\{x\in\mathbb{R}^n\big|\;p_1\in\nabla\ph_0(x)+\nabla\Phi(x)^*\p\thy(\Phi(x)+p_2)\big\}
\end{equation*}
admits a Lipschitzian single-valued graphical localization on $\tilde{U}\times\tilde{W}\times\tilde{V}$. This means that there exists a Lipschitzian single-valued mapping $g\colon\tilde{U}\times\tilde{W}\mapsto\tilde{V}$ such that $(\gph Q)\cap(\tilde{U}\times\tilde{W}\times\tilde{V})=\gph g$. Denote $U:=\tilde{U}$, $W:=\tilde{W}$ and take $\epsilon>0$ so small that $\mathbb{B}_\epsilon(\xb)\subset\tilde{V}$. The Lipschitzian single-valued graphical localization property of $Q$ allows us to find a constant $\ell\ge 0$ such that for any $(p_1,p_2)\in U\times W$ and any $(x_{p_1 p_2},\lm_{p_1 p_2})\in S_{KKT}(p_1,p_2)\cap\big(\mathbb{B}_\epsilon(\xb)\times\mathbb{B}_\epsilon(\olm)\big)$ we have the inclusion $x_{p_1 p_2}\in Q(p_1,p_2)$, and hence
\begin{equation*}
\n{x_{p_1 p_2}-\xb}=\n{x_{p_1 p_2}-x_{\pb_1\pb_2}}\le\ell\big(\n{p_1}+\n{p_2}\big).
\end{equation*}
Using now the error bound estimate \eqref{pap} from the proof of Theorem~\ref{charc} with replacing $\Lambda(\xb)$ by $\Lcxb$ and adjusting $\epsilon$ if necessary give us the semi-isolated calmness property \eqref{upper}, which is equivalent to the noncriticality of $\olm$ that was chosen arbitrary from the Lagrange multiplier set $\Lcxb$. This therefore completes the proof of theorem.
\end{proof}\vspace*{0.05in}

The result of Theorem~\ref{criful} calls for the deriving verifiable conditions for full stability of local minimizers to \eqref{co} expressed entirely via the problem data and the given minimizer. Such conditions allow us to efficiently exclude slow convergence of primal-dual algorithms to seek fully stable minimizers based on the initial data. Some characterizations of full stability of local minimizers for ENLPs of type \eqref{co} are obtained in \cite[Theorem~7.3]{brs13} under rather strong assumptions. Relaxing these assumptions is a challenging goal of our future research.

\section{Noncriticality and Lipschitzian Stability of Solutions to ENLPs}\label{lip-stab}

In this section we use the machinery developed above to investigate other notions of Lipschitzian stability, which occur to be related to noncriticality of multipliers for ENLPs. The following theorem provides characterizations of both isolated calmness and robust isolated calmness properties of the KKT solution map \eqref{skkt} associated with ENLP \eqref{co} in terms of the second-order sufficient condition \eqref{sc} as well as noncriticality and uniqueness of Lagrange multipliers.

\begin{Th}{\bf(characterizations of robust isolated calmness of solution maps).}\label{calmness} Let $\xb$ be a feasible solution to ENLP \eqref{co} with $\theta$ taken from \eqref{theta}, and let $\olm\in\Lcxb$ be a corresponding Lagrange multiplier from \eqref{lcom}. The following assertions are equivalent:\vspace{-0.2 cm}
\begin{itemize}[noitemsep]
\item[{\bf(i)}] The solution map $S_{KKT}$ from \eqref{skkt} is robustly isolatedly calm at the point $\big((0,0),(\xb,\olm)\big)\in\R^{n+m}\times\R^{n+m}$,
and $\xb$ is a local optimal solution to \eqref{co}.

\item[{\bf(ii)}] The second-order sufficient condition \eqref{sc} holds, and $\Lambda_{\mathrm{com}}(\xb)=\{\olm\}$.

\item[{\bf(iii)}] $\Lambda_{\mathrm{com}}(\xb)=\{\olm\}$, $\xb$ is a local optimal solution to \eqref{co}, and $\olm$ is a noncritical multiplier for
\eqref{VS} with $\Psi=\nabla_x L$ that is associated with the optimal solution $\xb$.

\item[{\bf(iv)}] $S_{KKT}$ is isolatedly calm at $\big((0,0),(\xb,\olm)\big)$, and $\xb$ is a local optimal solution to \eqref{co}.
\end{itemize}
\end{Th}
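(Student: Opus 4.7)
The plan is to prove the cycle (i) $\Rightarrow$ (iv) $\Rightarrow$ (iii) $\Rightarrow$ (ii) $\Rightarrow$ (i). The implication (i) $\Rightarrow$ (iv) is built into the definition, since robust isolated calmness strengthens isolated calmness only by the extra nonemptiness requirement.

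For (iv) $\Rightarrow$ (iii), I would evaluate the isolated calmness inclusion \eqref{iso-calm} at $(p_1,p_2)=(0,0)$ to conclude $S_{KKT}(0,0)\cap V=\{(\xb,\olm)\}$. Since $\{\xb\}\times\Lcxb\subset S_{KKT}(0,0)$ and $\Lcxb$ is convex (being the intersection of the convex subgradient set $\p\thy(\Phi(\xb))$ with the affine condition $\nabla_xL(\xb,\cdot)=0$), this forces $\Lcxb=\{\olm\}$. Under this singleton structure, isolated calmness coincides with the semi-isolated calmness \eqref{upper}, so applying the equivalence (i)$\Leftrightarrow$(ii) of Theorem~\ref{charc} with $\Psi=\nabla_xL$ yields noncriticality of $\olm$; local optimality of $\xb$ is inherited directly from the assumption.

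For (iii) $\Rightarrow$ (ii), I would first observe that uniqueness $\Lcxb=\{\olm\}$ enforces BCQ \eqref{bcq}: any nonzero $\eta\in N_{\dom\thy}(\Phi(\xb))\cap\ker\nabla\Phi(\xb)^*$ would, via the horizon-cone characterization of $N_{\dom\thy}(\Phi(\xb))$ as the recession cone of $\p\thy(\Phi(\xb))$, produce an entire ray $\olm+t\eta\subset\Lcxb$, contradicting uniqueness. Lemma~\ref{esonc} then gives the weak second-order inequality at the unique multiplier $\olm$. To upgrade to the strict SOSC \eqref{sc}, suppose that some $w\ne 0$ with $\nabla\Phi(\xb)w\in\ok^*+\rge B=\dom\thk$ attains equality. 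Then $w$ is a global minimizer of the nonnegative function $\xi\mapsto\langle\nabla^2_{xx}L(\xb,\olm)\xi,\xi\rangle+2\thk(\nabla\Phi(\xb)\xi)$, which is the sum of a smooth quadratic and a convex PLQ composition; Fermat's rule together with the polyhedral subdifferential chain rule produce $\eta\in\p\thk(\nabla\Phi(\xb)w)$ satisfying $\nabla^2_{xx}L(\xb,\olm)w+\nabla\Phi(\xb)^*\eta=0$. Rewriting $\eta\in\p\thk(\nabla\Phi(\xb)w)$ via $(N_\ok+B)^{-1}$ from Theorem~\ref{gdt}(ii) as the cone-polarity system $\eta\in\ok$, $\nabla\Phi(\xb)w-B\eta\in\ok^*$, $\langle\nabla\Phi(\xb)w-B\eta,\eta\rangle=0$ identifies $(w,\eta)$ as a nontrivial solution of \eqref{cri}, contradicting noncriticality of $\olm$.

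For (ii) $\Rightarrow$ (i), Theorem~\ref{esosc} immediately turns SOSC into quadratic growth \eqref{sogc} (so $\xb$ is a strict local optimum) and into noncriticality of $\olm$; combined with the assumed uniqueness, (iii) holds, and the already-proved arrow (iii) $\Rightarrow$ (iv) gives isolated calmness of $S_{KKT}$ at $((0,0),(\xb,\olm))$. The main obstacle, which I expect to be the genuinely delicate step, is the robust part: showing $S_{KKT}(p_1,p_2)\cap V\ne\emp$ for every small $(p_1,p_2)$. My strategy is to exploit the quadratic growth \eqref{sogc} together with the persistence of BCQ \eqref{bcq} under small perturbations: BCQ yields metric regularity of the constraint mapping $x\mapsto\Phi(x)-\dom\thy$ near $\xb$, so feasible $x$ for the perturbed problem \eqref{pertco} exist near $\xb$, and \eqref{sogc} together with a standard lower-semicontinuity argument produces a local minimizer $x_{p_1 p_2}$ strictly inside the ball $\xb+\gamma\B$. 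Since BCQ persists at $x_{p_1 p_2}$, a multiplier $\lm_{p_1 p_2}$ for \eqref{kkt} exists, so $(x_{p_1 p_2},\lm_{p_1 p_2})\in S_{KKT}(p_1,p_2)$, and the already-established isolated calmness forces this pair into $V$. The subtle points are the absence of uniform Lipschitzness of $\thy$ at boundary points of $\dom\thy$ and the transfer of BCQ to $x_{p_1 p_2}$; both are handled by exploiting the polyhedral structure of $\dom\thy$ together with the metric regularity afforded by BCQ.
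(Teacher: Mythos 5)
Your decomposition of the theorem and most of the intermediate arguments match the paper's proof closely: the same tools appear (convexity of $\Lcxb$ to get the singleton from isolated calmness, Theorem~\ref{charc} to pass between noncriticality and (semi\mbox{-})isolated calmness, Lemma~\ref{esonc} plus Fermat's rule and the chain rule for $\p\thk$ to upgrade the necessary condition to the strict SOSC, Theorem~\ref{esosc} for the converse, and metric regularity of the feasible-set map plus quadratic growth to localize the perturbed minimizer). There are, however, two issues.

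The substantive gap is at the very end of (ii)$\Rightarrow$(i). After you build $(x_{p_1p_2},\lm_{p_1p_2})\in S_{KKT}(p_1,p_2)$ with $x_{p_1p_2}$ near $\xb$, you write that ``the already-established isolated calmness forces this pair into $V$.'' That is not what isolated calmness gives you. The inclusion \eqref{iso-calm} only bounds solutions that are \emph{already known} to lie in $V$; it says nothing about solutions lying outside $V$, so it cannot be used to conclude that a newly constructed multiplier $\lm_{p_1p_2}$ is close to $\olm$. You need an independent a priori estimate on $\lm_{p_1p_2}$. The paper supplies exactly this via a Hoffman-lemma argument (as in \eqref{err}--\eqref{pap}): using $\Lcxb=\{\olm\}$ one gets $\|\lm_{p_1p_2}-\olm\|=\dist(\lm_{p_1p_2};\Lcxb)\le\ell'(\|x_{p_1p_2}-\xb\|+\|p_1\|+\|p_2\|)$, which does force the pair into $V$ for small perturbations. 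Without some version of this estimate, nonemptiness of $S_{KKT}(p_1,p_2)\cap V$ is not established.

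There is also a minor logical bookkeeping slip: inside (ii)$\Rightarrow$(i) you appeal to ``the already-proved arrow (iii)$\Rightarrow$(iv),'' but your cycle only establishes (iv)$\Rightarrow$(iii). The implication (iii)$\Rightarrow$(iv) is indeed easy (noncriticality gives semi-isolated calmness by Theorem~\ref{charc}, and with $\Lcxb=\{\olm\}$ the right-hand side of \eqref{upper} collapses to $\{(\ox,\olm)\}+\ell(\cdot)\B$, which is \eqref{iso-calm}), but it needs to be stated, not taken as ``already proved.'' Otherwise the cycle as declared does not carry the implication you rely on.
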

\begin{proof} The outline of the proof is as follows. We sequentially verify implications (ii)$\Longrightarrow$(iii), (iii)$\Longrightarrow$(iv),
(iv)$\Longrightarrow$(iii), (iii)$\Longrightarrow$(ii), and (i)$\iff$(iv).

To prove (ii)$\Longrightarrow$(iii), assume the validity of \eqref{sc} and that $\Lcxb=\{\olm\}$. Then Theorem~\ref{esosc} tells us that $\xb$ is a strict local minimizer of \eqref{co} and that $\olm$ is a noncritical multiplier of \eqref{VS} with $\Psi=\nabla_x L$ corresponding to $\ox$, and thus (iii) is satisfied.

Suppose next that all the conditions in (iii) hold.
Since $\olm$ is noncritical, we derive the semi-isolated calmness of $S_{KKT}$ at $\big((0,0),(\xb,\olm)\big)$. This together with $\Lcxb=\{\olm\}$
results in the existence of   a number $\ell\ge 0$ as well as neighborhoods $U$ of $(0,0)$ and $V$ of $(\xb,\olm)$ such that
\begin{equation}\label{ilc}
S_{KKT}(p_1,p_2)\cap V\subset\big\{(\xb,\olm)\big\}+\ell\n{(p_1,p_2)}\mathbb{B}\;\textrm{ for all }\;(p_1,p_2)\in U.
\end{equation}
Thus $S_{KKT}$ enjoys the isolated calmness property at $\big((0,0,(\xb,\olm))\big)$, and we arrive at (iv).

To verify the opposite implication (iv)$\Longrightarrow$(iii), let us show that the isolated calmness of $S_{KKT}$ at $\big((0,0),(\xb,\olm)\big)$ in (iv) yields $\Lcxb=\{\olm\}$.  Indeed, suppose on the contrary that $\Lcxb$ is not a singleton. Then there exists $\hat{\lm}\in\Lcxb$ with $\hat{\lm}\ne\olm$. Since the set $\Lcxb$ is convex, every point of the line segment connecting $\olm$ and $\hat{\lm}$ belongs to $\Lcxb$. The isolated calmness of $S_{KKT}$ at $\big((0,0),(\xb,\olm)\big)$ amounts to \eqref{ilc}, and hence we can find $\lm'\ne\olm$ with $\lm'\in\Lcxb$ and such that $\lm'$ is sufficiently close to $\olm$, i.e., $(\xb,\lm')\in V$. Then it follows from \eqref{ilc} that
\begin{equation*}
\n{\lm'-\olm}\le\ell\cdot 0=0,
\end{equation*}
which yields $\lm'=\olm$, a contradiction ensuring that $\Lcxb$ is a singleton. Theorem~\ref{charc} tells us that $\olm$ is a noncritical multiplier of \eqref{VS} corresponding to $\ox$, and thus (iii) holds.

Next we verify implication (iii)$\Longrightarrow$(ii). Let us first deduce from $\Lcxb=\{\olm\}$ in (iii) that the qualification condition \eqref{bcq} in (ii) is satisfied. Supposing the contrary, find a normal $v\in N_{{\rm dom}\,\thy}(\Phi(\ox))$ with $v\ne 0$ such that $\nabla\Phi(\xb)^*v=0$. Letting $\lm':=\olm+v$, we get $\lm'\ne\olm$ and $\nabla_x L(\xb,\lm')=0$ for the Lagrangian function \eqref{comlag}. By the choice of $v$ and the normal cone definition \eqref{nc-conv} we get from the above that
\begin{equation*}
\la\lm',z-\Phi(\ox)\ra\le\thy(z)-\thy(\Phi(\ox))\;\mbox{ for all }\;z\in\dom\thy,
\end{equation*}
which shows that $\lm'\in\p\thy(\Phi(\ox))$ and hence $\lm'\in\Lcxb$ due to $\nabla_x L(\xb,\lm')=0$. Since $\lm'\ne\vb$, it gives us a contradiction with the assumption of $\Lcxb=\{\olm\}$ in (iii) and thus justifies the validity of the qualification condition \eqref{bcq}. Employing now Lemma~\ref{esonc} tells us that the second-order {\em necessary} optimality condition \eqref{nc1} is satisfied.

To finish the verification of (iii)$\Longrightarrow$(ii), we need to prove that the second-order {\em sufficient} optimality condition \eqref{sc} holds under the assumptions in (iii). Supposing the contrary gives us a nonzero element $\xi_0\in\{w|\;\nabla\Phi(\xb)w\in\ok^*+\rge B\}$ such that
\begin{equation*}
\big\la\nabla_{xx}^2L(\xb,\olm)\xi_0,\xi_0\big\ra+2\theta_{\ok,B}\big(\nabla\Phi(\xb)\xi_0\big)\le 0.
\end{equation*}
Since $\Lcxb=\{\olm\}$, it is easy to see that the second-order necessary condition \eqref{nc1} can be equivalently written as
\begin{equation*}
\big\la\nabla_{xx}^2L(\xb,\olm)w,w\big\ra+2\theta_{\ok,B}\big(\nabla\Phi(\xb)w\big)\ge 0\quad\mbox{for all}\;w\in\R^n\;\;\mbox{with}\;\;\nabla\Phi(\ox)w\in \dom\thk.
\end{equation*}
Furthermore, employing the equalities
\begin{equation*}
\nabla\Phi(\xb)\xi_0\in\ok^*+\rge B=\big(\ok\cap\ker B\big)^*=\dom\thk
\end{equation*}
allows us to deduce from the equivalent form of the second-order necessary condition that
\begin{equation*}
\big\la\nabla_{xx}^2L(\xb,\olm)\xi_0,\xi_0\big\ra+2\theta_{\ok,B}\big(\nabla\Phi(\xb)\xi_0\big)=0.
\end{equation*}
This in turn implies that the vector $\xi_0$ is an {\em optimal solution} to the problem
\begin{equation*}
\min_{\xi\in\R^n}\;\frac{1}{2}\big\la\nabla_{xx}^2L(\xb,\olm)\xi,\xi\big\ra+\theta_{\ok,B}\big(\nabla\Phi(\xb)\xi\big).
\end{equation*}
Applying the subdifferential Fermat rule to the latter problem and then using the elementary sum rule for convex subgradients together with the chain rule from
\cite[Exercise 10.22(b)]{rw} yield
\begin{eqnarray*}
0&\in&\nabla_{xx}^2L(\xb,\olm)\xi_0+
\nabla\Phi(\xb)^*\p\thk(\nabla\Phi(\xb)\xi_0)\\
&=& \nabla_{xx}^2L(\xb,\olm)\xi_0+
\nabla\Phi(\xb)^*D\p\thy(\Phi(\ox),\olm)(\nabla\Phi(\xb)\xi_0),
\end{eqnarray*}
where the last equality comes from  \eqref{gdr}. Since $\xi_0\ne 0$, it shows by Definition~\ref{crit} that $\olm$ is a critical multiplier.
This contradicts the assumption in (iii) that $\olm$ is a noncritical multiplier and therefore verifies the validity of \eqref{sc} and the entire implication (iii)$\Longrightarrow$(ii).
	
Our next step is to prove implication (i)$\Longrightarrow$(iv), which clearly holds. To complete the proof of the theorem, it remains to verify implication (iv)$\Longrightarrow$(i).
To achieve this implication, we only need to show that there are neighborhoods $U$ of $(0,0)$ and $V$ of $(\xb,\olm)$
such that $S_{KKT}(p_1,p_2)\cap V\neq \emptyset$ for all $(p_1,p_2)\in U$.
To this end, define the set-valued mapping $Q\colon\mathbb{R}^m\rightrightarrows\Rn$ by
\begin{equation*}
Q(p):=\big\{x\in\R^n\big|\;\Phi(x)+p\in\dom\thy\big\},\quad p\in\Rn.
\end{equation*}
Having already proved (iv) and (iii) are equivalent, we have
the  qualification condition \eqref{bcq} because of the assumptions in (iii).
As proved above, (iii) and  (ii) are equivalent. Thus the second-order sufficient condition \eqref{sc} is satisfied and implies by Theorem~\ref{esosc} that $\xb$ is a strict local minimizer for \eqref{co}.
This gives a neighborhood $O$ of $\ox$ for which we have
\begin{equation}\label{gd03}
\ph_0(\ox)+\thy\big(\Phi(\ox)\big) < \ph_0(x)+\thy\big(\Phi(x)\big)\quad \mbox{for all}\quad x\in O.
\end{equation}
Applying \cite[Theorem~4.37(ii)]{m06} to the mapping $Q$ with the initial point $(0,\ox)$ gives us numbers $r>0$ and $\ell\ge 0$ such that
\begin{equation}\label{q}
Q(p)\cap\B_r(\xb)\subset Q(p')+\ell\n{p-p'}\B\;\textrm{ for all }\;p,p'\in \B_r(0),
\end{equation}
where $r$ can be chosen such that $\B_r(\xb)\subset O$. Consider now the optimization problem
\begin{equation}\label{mf01}
\textrm{minimize }\;\ph_0(x)+\thy\big(\Phi(x)+p_2\big)-\inp{p_1}{x}\;\textrm{ subject to }\;x\in\B_r(\xb)\cap Q(p_2).
\end{equation}
It is clear that this problem admits an optimal solution $x_{p_1p_2}$ for any pair $(p_1,p_2)\in \R^n\times \B_r(0)$ since the cost function therein is lower semicontinuous while the constraint set is obviously compact.
Let us now show that there is a number $\epsilon>0$ with $\B_\epsilon(0,0)$ such that
\begin{equation}\label{bd}
x_{p_1p_2}\in{\rm int}\,\B_r(\xb)\;\textrm{ for any }\;(p_1,p_2)\in \B_\epsilon(0,0).
\end{equation}
Suppose the contrary and then find sequences $(p_{1k},p_{2k})\rightarrow(0,0)$ and $x_{p_{1k}p_{2k}}$ for which $\|{x_{p_{1k}p_{2k}}}-\ox\|=r$.
We get without loss of generality that $x_{p_{1k}p_{2k}}\rightarrow x_0$ as $k\to\infty$ and so $\|{x_0}-\ox\|=r$.
This yields $x_0\neq \ox$.
Since $x_{p_{1k}p_{2k}}$ is an optimal solution to \eqref{mf01}, it follows that
\begin{equation}\label{coarr}
\ph_0(x_{p_{1k}p_{2k}})+\thy\big(\Phi(x_{p_{1k}p_{2k}})+p_{2k}\big)-\inp{p_{1k}}{x_{p_{1k}p_{2k}}}\le\ph_0(x)+\thy\big(\Phi(x)+p_{2k}\big)-
\inp{p_{1k}}{x}
\end{equation}
for all $x\in\B_r(\xb)\cap Q(p_{2k})$. Pick any $x\in\B_{\frac{r}{2}}(\xb)\cap Q(0)$ and $k\in\mathbb{N}$ so large that $p_{2k}\in\alpha\B$ with $\alpha<\min\{\frac{r}{2\ell},r\}$. It follows from \eqref{q} that there exist $x'\in Q(p_{2k})$ and $b\in\B$ satisfying
\begin{equation*}
\n{x'-\xb}\le\n{x-\xb}+\ell\n{p_{2k}}\le\frac{r}{2}+\ell\frac{r}{2\ell}=r,\;\textrm{ where }\;x:=x'+\ell\n{p_{2k}}b.
\end{equation*}
Thus $x'\in\B_r(\xb)\cap Q(p_{2k})$, and it follows from \eqref{coarr} that
\begin{eqnarray*}
\ph_0(x_{p_{1k}p_{2k}})&+\thy\big(\Phi(x_{p_{1k}p_{2k}})+p_{2k}\big)-\inp{p_{1k}}{x_{p_{1k}p_{2k}}}\le\ph_0\big(x-\ell\n{p_{2k}}b\big)\\
&+\thy\big(\Phi(x-\ell\n{p_{2k}}b)+p_{2k}\big)-\inp{p_{1k}}{x-\ell\n{p_{2k}}b}.
\end{eqnarray*}
Passing to the limit at the latter inequality as $k\rightarrow\infty$ gives us the estimate
\begin{equation*}
\ph_0(x_0)+\thy\big(\Phi(x_0)\big)\le\ph_0(x)+\thy\big(\Phi(x)\big),
\end{equation*}
which holds for all $x\in\B_{\frac{r}{2}}(\xb)\cap Q(0)$. In particular,  we have
\begin{equation}\label{oppstos}
\ph_0(x_0)+\thy\big(\Phi(x_0)\big)\le\ph_0(\xb)+\thy\big(\Phi(\xb)\big),
\end{equation}
which contradicts \eqref{gd03} since $x_0\neq \ox$ and $x_0\in \B_r(\ox)\subset O$, and thus we arrive at \eqref{bd}.

At the last step of the proof, denote by ${\Lambda}_\mathrm{com}(x_{p_1p_2})$ be the set of Lagrange multipliers associated with the optimal solution $x_{p_1p_2}$ to problem \eqref{mf01}. It follows from the validity of the qualification condition \eqref{bcq} and its robustness with respect to perturbations of the initial point that this qualification condition is also satisfied for the perturbed problem \eqref{mf01}.
This implies in turn that ${\Lambda}_\mathrm{com}(x_{p_1p_2})\ne\emp$ for all $(p_1,p_2)$  sufficiently close to $(0,0)\in \R^n\times \R^m$.
Assume without loss of generality that ${\Lambda}_\mathrm{com}(x_{p_1p_2})\ne\emp$ for all $(p_1,p_2)\in \B_\epsilon(0,0)$, where $\epsilon$ is taken from
\eqref{bd}.
Using  a similar argument as \eqref{err} and \eqref{pap} via the Hoffman lemma
gives us a constant $\ell'\ge 0$ such that for any $(p_1,p_2)\in \B_\epsilon(0,0)$ and any $\lm_{p_1p_2}\in {\Lambda}_{\mathrm{com}}(x_{p_1p_2})$ we have
\begin{equation*}
\|\lm_{p_1p_2}-\olm\|=\dist\big(\lm_{p_1p_2};\Lambda_\mathrm{com}(\bar{x})\big)\le\ell'\big(\n{x_{p_1p_2}-\bar{x}}+\n{p_1}+\n{p_2}\big).
\end{equation*}
This clearly proves the existence of a neighborhood $V$ of $(\xb,\olm)$
such that $S_{KKT}(p_1,p_2)\cap V\neq \emptyset$ for all $(p_1,p_2)\in  \B_\epsilon(0,0)$
and so finishes the proof of implication (iv)$\Longrightarrow$(i).
\end{proof}\vspace*{0.05in}

The final piece of this paper concerns yet another well-recognized Lipschitzian type property, which seems to be the most natural extension of {\em robust} Lipschitzian behavior to set-valued mapping. For this reason we label it as the Lipschitz-like property \cite{m06} while it is also known as the pseudo-Lipschitz or Aubin one. It is said that a set-valued mapping/multifunction $F\colon\R^n\tto\R^m$ is {\em Lipschitz-like} around $(\ox,\oy)\in\gph F$ if there exists a constant $\ell\ge 0$ together with neighborhoods $U$ of $\ox$ and $V$ of $\oy$ such that we have the inclusion
\begin{equation}\label{lip-like}
F(x')\cap V\subset F(x)+\ell\|x-x'\|\B\;\mbox{ for all }\;x,x'\in U.
\end{equation}
To formulate a convenient characterization of property \eqref{lip-like}, we recall first the notion of the {\em normal cone} to a set $\O\subset\R^n$ at a point $\ox\in\O$ defined by
\begin{equation*}
N_\Omega(\ox):=\Big\{v\in\R^n\Big|\;\textrm{ there exist }\;x_k\xrightarrow{\Omega}\ox,\;v_k\to v\;\mbox{ with }\;\disp\limsup_{x\to x_k}\frac{\la v_k,x-x_k\ra}{\|x_k-x\|}\le 0\Big\}.
\end{equation*}
The {\em coderivative} of a set-valued mapping $F\colon\R^n\tto\R^m$ at $(\ox,\oy)\in\gph F$ is given by
\begin{equation*}
D^*F(\xb,\yb)(v):=\big\{u\in\R^n\big|\;(u,-v)\in N_{\gph F}(\xb,\yb)\big\},\quad v\in\R^m.
\end{equation*}
The following characterization of the Lipschitz-like property for any closed-graph mapping $F\colon\R^n\tto\R^m$ around $(\ox,\oy)\in\gph F$ is known as the {\em Mordukhovich criterion} from \cite[Theorem~9.40]{rw}, where the proof is different from the original one; see \cite[Theorem~5.7]{m93} as well as its infinite-dimensional extension given in \cite[Theorem~4.10]{m06}:
\begin{equation}\label{cod-cr}
D^*F(\ox,\oy)(0)=\{0\}.
\end{equation}
Note the results obtained therein provide also a precise computation of the {\em exact bound}/infimum of Lipschitzian moduli $\{\ell\}$ in \eqref{lip-like} via the coderivative norm at $(\ox,\oy)$.

Full {\em coderivative calculus} developed for coderivatives, which is based on variational/extremal principles of variational analysis and can be found in \cite{m18,m06,rw}, allows us apply the general characterization \eqref{cod-cr} to specific multifunctions given in some structural forms. The next theorem employs \eqref{cod-cr} and coderivative calculus to characterize the Lipschitz-like property of the solution map \eqref{skkt} to the canonically perturbed KKT system \eqref{kkt}.

\begin{Th}{\bf(Lipschitz-like property of solution maps).}\label{Liplike} Let $(\xb,\olm)\in S_{KKT}(0,0)$ for the solution map $S_{KKT}$ defined in \eqref{skkt} with $\theta$ taken from \eqref{theta}. Then $S_{KKT}$ is Lipschitz-like around $\big((0,0),(\xb,\olm)\big)$ if and only if we have the implication
\begin{equation}\label{Lipdesc}
\begin{cases}
\nabla_{xx}^2 L(\xb,\olm)\xi+\nabla\Phi(\xb)^*\eta=0\\
\eta\in\big(D^*\partial\thy\big)(\Phi(\ox),\olm)\big(\nabla\Phi(\xb)\xi\big)
\end{cases}
\Longrightarrow(\xi,\eta)=(0,0).
\end{equation}
\end{Th}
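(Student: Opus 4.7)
The plan is to combine the Mordukhovich criterion \eqref{cod-cr} with the standard coderivative chain rule applied to the pre-image description of $\gph S_{KKT}$. By \eqref{cod-cr}, the mapping $S_{KKT}$ is Lipschitz-like around $\bigl((0,0),(\xb,\olm)\bigr)$ if and only if $D^{*}S_{KKT}\bigl((0,0),(\xb,\olm)\bigr)(0,0)=\{(0,0)\}$, so it suffices to identify this coderivative and match it with \eqref{Lipdesc}.

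To compute the coderivative, write the graph of $S_{KKT}$ as a pre-image. Define the $\mathcal{C}^{1}$ map
\begin{equation*}
\Psi\colon\R^{n}\times\R^{m}\times\R^{n}\times\R^{m}\to\R^{n}\times\R^{m}\times\R^{m},\quad
\Psi(p_{1},p_{2},x,\lm):=\bigl(p_{1}-\nabla_{x}L(x,\lm),\,p_{2}+\Phi(x),\,\lm\bigr),
\end{equation*}
and observe that $\gph S_{KKT}=\Psi^{-1}(\{0\}_{n}\times\gph\partial\thy)$. A direct inspection of the block Jacobian
\begin{equation*}
\nabla\Psi=\begin{pmatrix}I_{n}&0&-\nabla_{xx}^{2}L(\xb,\olm)&-\nabla\Phi(\xb)^{*}\\0&I_{m}&\nabla\Phi(\xb)&0\\0&0&0&I_{m}\end{pmatrix}
\end{equation*}
shows that $\nabla\Psi$ is surjective at $\bigl((0,0),(\xb,\olm)\bigr)$, since the $p_{1}$, $p_{2}$, and $\lm$ columns already span the whole range. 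This surjectivity secures the qualification condition needed for the coderivative (normal cone) chain rule and in fact yields equality in it, so
\begin{equation*}
N_{\gph S_{KKT}}\bigl((0,0),(\xb,\olm)\bigr)=\nabla\Psi^{*}\bigl(\R^{n}\times N_{\gph\partial\thy}(\Phi(\xb),\olm)\bigr).
\end{equation*}

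Next I would unpack this identity. A pair $\bigl((\alpha_{1},\alpha_{2}),(\beta_{1},\beta_{2})\bigr)$ lies in $N_{\gph S_{KKT}}\bigl((0,0),(\xb,\olm)\bigr)$ if and only if there exist $\mu\in\R^{n}$ and $(a,b)\in N_{\gph\partial\thy}(\Phi(\xb),\olm)$ with
\begin{equation*}
\alpha_{1}=\mu,\quad\alpha_{2}=a,\quad\beta_{1}=-\nabla_{xx}^{2}L(\xb,\olm)\mu+\nabla\Phi(\xb)^{*}a,\quad\beta_{2}=-\nabla\Phi(\xb)\mu+b.
\end{equation*}
Applying the coderivative definition translates the inclusion $(a,b)\in N_{\gph\partial\thy}(\Phi(\xb),\olm)$ into $a\in D^{*}\partial\thy(\Phi(\xb),\olm)(-b)$. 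Specializing to $(\beta_{1},\beta_{2})=(0,0)$ (which is the Mordukhovich criterion), the resulting system reads $b=\nabla\Phi(\xb)\alpha_{1}$, $\nabla_{xx}^{2}L(\xb,\olm)\alpha_{1}=\nabla\Phi(\xb)^{*}\alpha_{2}$, and $\alpha_{2}\in D^{*}\partial\thy(\Phi(\xb),\olm)(-\nabla\Phi(\xb)\alpha_{1})$. The simple substitution $\xi:=-\alpha_{1}$, $\eta:=\alpha_{2}$ converts this into the system on the left-hand side of \eqref{Lipdesc}, so $D^{*}S_{KKT}\bigl((0,0),(\xb,\olm)\bigr)(0,0)=\{(0,0)\}$ is equivalent to the implication displayed in \eqref{Lipdesc}. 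This gives both directions of the theorem.

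The main technical point I expect to be delicate is the verification that the coderivative chain rule produces an \emph{equality} rather than only an upper inclusion; this is where the surjectivity of $\nabla\Psi$ is essential, since without it one would only get ``$\Leftarrow$'' in the final biconditional. A secondary source of care is bookkeeping of signs between the normal cone to $\gph\partial\thy$ and the coderivative $D^{*}\partial\thy$, which forces the sign flip $\xi=-\alpha_{1}$ above; all other steps are essentially mechanical once the pre-image representation and surjectivity of $\nabla\Psi$ have been set up.
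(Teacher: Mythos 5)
Your proof is correct, and it takes a genuinely different computational route from the paper's. Both arguments start from the Mordukhovich criterion \eqref{cod-cr}, reducing the statement to the computation of $D^{*}S_{KKT}\big((0,0),(\xb,\olm)\big)(0,0)$. From there the paper works with the mapping $G$ from \eqref{g} (with $\Psi=\nabla_{x}L$), converts $D^{*}S_{KKT}$ to $D^{*}G$ by the standard ``coderivative of the solution map equals coderivative of the inverse'' identity \eqref{S}, and then decomposes $D^{*}G$ via the coderivative sum rule in equality form (\cite[Theorem~3.9]{m18}), which applies because one summand of $G$ is smooth. You instead represent $\gph S_{KKT}$ as the preimage $\Psi^{-1}(\{0\}\times\gph\p\thy)$ under a $\mathcal{C}^{1}$ map (legitimate here since from Section~\ref{comp-opt} onward $\ph_{0}$ and $\Phi$ are assumed $\mathcal{C}^{2}$), and apply the change-of-coordinates rule for normal cones (e.g.\ \cite[Exercise~6.7]{rw}); surjectivity of $\nabla\Psi$, which you correctly verify from the identity blocks in the $p_{1},p_{2},\lm$ columns, is exactly what makes that rule hold with equality and makes the constraint qualification vacuous. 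Your subsequent unpacking of $\nabla\Psi^{*}\big(\R^{n}\times N_{\gph\p\thy}\big)$ and the sign-bookkeeping $\xi=-\alpha_{1}$, $\eta=\alpha_{2}$ correctly reproduce the system in \eqref{Lipdesc}. In short: the paper buys the computation through a sum rule, you buy it through a preimage rule; both CQs are automatic (smoothness of one summand versus surjectivity of $\nabla\Psi$), and both give the same clean equality. The only thing worth adding to your write-up is an explicit citation for the fact that surjectivity of $\nabla\Psi$ yields equality, not just an upper estimate, in the normal cone chain rule, since that is the step you yourself flag as delicate; \cite[Exercise~6.7]{rw} is the natural reference.
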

\begin{proof} Consider the mapping $G$ from  \eqref{g} with $\Psi=\nabla_x L$.
We easily deduce from the coderivative definition and the form of $S$ that
\begin{equation}\label{S}
(\xi,\eta)\in D^*S_{KKT}\big((0,0),(\xb,\olm)\big)(w_1,w_2)\Longleftrightarrow-(w_1,w_2)\in D^*G\big((\xb,\olm),(0,0)\big)(-\xi,-\eta)
\end{equation}
for all $(\xi,\eta)\in\mathbb{R}^n\times\mathbb{R}^m$ and $(w_1,w_2)\in\mathbb{R}^n\times\mathbb{R}^m$.
Using the structure of $G$ and employing the coderivative sum rule in the equality form from \cite[Theorem~3.9]{m18} yield
\begin{equation}\label{G}
\begin{array}{ll}
D^*G\big((\xb,\olm),(0,0)\big)(\xi,\eta)&=\left[\begin{array}{c c}\nabla_{xx}^2 L(\xb,\olm)&-\nabla\Phi(\xb)^*\\
\nabla\Phi(x)&0
\end{array}\right]
\left[\begin{array}{c c}\xi\\\eta\end{array}\right]+\left[\begin{array}{c c}0\\D^*(\partial\thy)^{-1}(\olm,\Phi(\ox))(\eta) \end{array}\right]\\\\
&=\left[\begin{array}{c c}\nabla_{xx}^2 L(\xb,\olm)\xi-\nabla\Phi(\xb)^*\eta\\\nabla\Phi(x)\xi+D^*(\partial\thy)^{-1}(\olm,\Phi(\ox))(\eta)
\end{array}\right].
\end{array}
\end{equation}
It follows from \eqref{S} and the coderivative criterion \eqref{lip-like} that $S_{KKT}$ is Lipschitz-like around $\big((0,0),(\xb,\olm)\big)$ if and only if we have the implication
\begin{equation*}
(0,0)\in D^*G\big((\xb,\olm),(0,0)\big)(\xi,\eta)\Longrightarrow(\xi,\eta)=(0,0),
\end{equation*}
which leads us together the coderivative representation for $G$ in \eqref{G} to characterization \eqref{Lipdesc} of the Lipschitz-like property of the solution map $S_{KKT}$.
\end{proof}\vspace*{0.05in}

Combining finally the obtained characterization of the Lipschitz-like property in Theorem~\ref{Liplike} with some known facts of variational analysis allows us to reveal a relationship between the latter property of the solution map $S_{KKT}$ and its isolated calmness at the same point.

\begin{Th}{\bf(Lipschitz-like property of solution maps implies their isolated calmness).}\label{ll-calm} Let $S_{KKT}$ be the solution map \eqref{skkt} of the canonically perturbed KKT system \eqref{kkt} with the piecewise linear-quadratic term \eqref{theta}, and let $(\xb,\olm)\in S_{KKT}(0,0)$. If $S_{KKT}$ is Lipschitz-like around $\big((0,0),(\xb,\olm)\big)$, then it enjoys the isolated calmness property at this point.
\end{Th}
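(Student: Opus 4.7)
The plan is to combine the coderivative characterization of the Lipschitz-like property from Theorem~\ref{Liplike} with the graphical-derivative criterion \eqref{grdcr} for isolated calmness applied to $S_{KKT}$.

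First, since $\gph S_{KKT}$ is locally closed, isolated calmness at $\big((0,0),(\xb,\olm)\big)$ is equivalent to $DS_{KKT}\big((0,0),(\xb,\olm)\big)(0,0)=\{(0,0)\}$. Using $S_{KKT}=G^{-1}$ and computing $DG\big((\xb,\olm),(0,0)\big)$ by the graphical-derivative sum and chain rules---paralleling the coderivative calculation \eqref{G} in the proof of Theorem~\ref{Liplike} with $D^{*}$ replaced throughout by $D$---this condition becomes the implication that $\nabla_{xx}^2L(\xb,\olm)\xi+\nabla\Phi(\xb)^{*}\eta=0$ together with $\eta\in D\p\thy(\Phi(\ox),\olm)(\nabla\Phi(\xb)\xi)$ force $(\xi,\eta)=(0,0)$.

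Next, by Theorem~\ref{Liplike} the Lipschitz-like hypothesis is equivalent to the analogous implication \eqref{Lipdesc} with the coderivative $D^{*}\p\thy$ in place of $D\p\thy$. It therefore suffices to show that every $(\xi,\eta)$ satisfying the graphical-derivative system above also satisfies \eqref{Lipdesc}; the Lipschitz-like assumption then immediately forces $(\xi,\eta)=(0,0)$, yielding isolated calmness by the first step.

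To bridge tangent to normal information I would exploit that $\gph\p\thy$ is piecewise polyhedral---a finite union of convex polyhedra, since $\thy$ is convex piecewise linear-quadratic---and apply the Reduction Lemma \cite[Lemma~2E.4]{dr} to replace $\gph\p\thy-(\Phi(\ox),\olm)$ locally by $\gph\p\thk$, which is a union of convex polyhedral cones through the origin. The tangent direction $(\nabla\Phi(\xb)\xi,\eta)$ then lies in one such cone $C_i$, and combining the polarity between $C_i$ and $C_i^{*}\subset N_{\gph\p\thk}(0,0)$ with the Lagrangian equation $\nabla_{xx}^2L\xi+\nabla\Phi(\xb)^{*}\eta=0$ should produce a matching element in $N_{\gph\p\thy}(\Phi(\ox),\olm)$ witnessing \eqref{Lipdesc}.

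The main obstacle is this last bridging step. The pointwise inclusion $D\p\thy(\zb,\olm)(u)\subset D^{*}\p\thy(\zb,\olm)(u)$ does not hold in general, as the one-dimensional example $\thy(z)=|z|$ at the origin shows; the argument must therefore use the Lagrangian equation to restrict to those pairs on which the complementarity structure of the polyhedral pieces of $\gph\p\thk$ permits the tangent-to-normal transfer. Carrying out this piecewise comparison by means of the explicit formulas for $D\p\thy$ and for the local pieces of $N_{\gph\p\thy}$ afforded by Theorem~\ref{gdt}(iii) and the Reduction Lemma appears to be the technical heart of the proof.
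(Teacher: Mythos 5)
Your overall plan---compare the coderivative characterization of the Lipschitz-like property from Theorem~\ref{Liplike} with the graphical-derivative (Levy--Rockafellar) characterization of isolated calmness applied to $S_{KKT}$, and then bridge the two by relating $D\p\thy$ to $D^{*}\p\thy$---is exactly the paper's strategy, and your first two steps are correct: the two systems differ only in $D^{*}\p\thy$ versus $D\p\thy$. The problem is your diagnosis of the last step. You assert that the pointwise inclusion $D\p\thy(\zb,\olm)(u)\subset D^{*}\p\thy(\zb,\olm)(u)$ ``does not hold in general, as the one-dimensional example $\thy(z)=|z|$ at the origin shows.'' That counterexample is wrong. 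At $(\zb,\olm)=(0,0)$ the graph of $\p\thy$ is locally the vertical segment $\{0\}\times[-1,1]$, so $D\p\thy(0,0)(u)$ and $D^{*}\p\thy(0,0)(u)$ both equal $\R$ when $u=0$ and $\emp$ otherwise. At the corner $(0,1)$ one computes $T_{\gph\p\thy}(0,1)=(\{0\}\times\R_-)\cup(\R_+\times\{0\})$, hence $D\p\thy(0,1)(0)=\R_-$, while $N_{\gph\p\thy}(0,1)$ contains $\R\times\{0\}$, so $D^{*}\p\thy(0,1)(0)=\R$; the inclusion holds (and is an equality for every $u\ne 0$). In fact, the inclusion $D\p\ph\subset D^{*}\p\ph$ pointwise is exactly what \cite[Theorem~13.57]{rw} delivers under prox-regularity and subdifferential continuity, assumptions automatically met by the convex piecewise linear-quadratic $\thy$ of \eqref{theta}. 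The paper's proof simply cites this result and is done.

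Because your premise is false, the entire workaround you propose---locally reducing $\gph\p\thy$ to $\gph\p\thk$ via the Reduction Lemma, decomposing into polyhedral cones $C_i$, and then ``using the Lagrangian equation to restrict to pairs on which the complementarity structure permits the tangent-to-normal transfer''---is both unnecessary and, as you concede, not actually carried out; it is labeled ``the technical heart'' but left open. As written the proposal therefore has a genuine gap: the bridging step is missing, and the motivation offered for why a delicate bridge should be needed rests on an incorrect counterexample. Replacing that paragraph with the one-line appeal to \cite[Theorem~13.57]{rw} closes the argument.
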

\begin{proof} Assuming that $S_{KKT}$ has the Lipschitz-like property around $\big((0,0),(\xb,\olm)\big)$, we get implication \eqref{Lipdesc} by Theorem~\ref{Liplike}. On the other hand, we proceed similarly to the proof of Theorem~\ref{Liplike} and get counterparts of the equalities in \eqref{S} and \eqref{G} with replacing the coderivative by the graphical derivative therein.  The latter one is due to the easily checkable sum rule for graphical derivatives of summations with one smooth term as in \eqref{g}. Having this, we apply the Levy-Rockafellar criterion of isolated calmness \eqref{grdcr} to the solution map \eqref{skkt} and thus conclude that the isolated calmness of $S_{KKT}$ at $\big((0,0),(\xb,\olm)\big)$ is equivalent to
\begin{equation}\label{calmS}
\begin{cases}
\nabla_{xx}^2L(\xb,\olm)\xi+\nabla\Phi(\xb)^*\eta=0\\
\eta\in\big(D\partial\thy\big)(\Phi(\ox),\olm)\big(\nabla\Phi(\xb)\xi\big)
\end{cases}
\Longrightarrow(\xi,\eta)=(0,0).
\end{equation}
Comparing \eqref{Lipdesc} and \eqref{calmS}, we see that the only difference is in terms involving $(D^*\partial\thy)(\Phi(\xb),\olm)$ and $(D\partial\thy)(\Phi(\xb),\olm)$. To this end we use derivative-coderivative relationship from \cite[Theorem~13.57]{rw}, which tells us that the inclusion
\begin{equation*}
(D\partial\thy)(\Phi(\xb),\olm)(u)\subset(D^*\partial\thy)(\Phi(\xb),\olm)(u)\;\mbox{ for all }\;u\in\mathbb{R}^m
\end{equation*}
holds under the assumptions that are automatically satisfied for the piecewise linear-quadratic function $\thy$ from \eqref{theta}. This therefore completes the proof of the theorem.
\end{proof}

\end{document}